\newlength{\hchng}
\newlength{\vchng}
\newcommand {\bc} {\begin{center}}
\newcommand {\ec} {\end{center}}
\theoremstyle{plain}
\newtheorem{thm}{Theorem}[section]
\newtheorem{lem}[thm]{Lemma}
\newtheorem{defn}[thm]{Definition}
\newtheorem{prop}[thm]{Proposition}
\newenvironment{proof}[1]{\begin{trivlist} \item[] {\em Proof of #1:}}{\hfill $\Box$
                      \end{trivlist}}
\newcommand{\ud}{\,\mathrm{d}}
\newcommand{\la}{\lambda}
\newcommand{\R}{\mathbb{R}}
\newcommand{\mS}{\mathbb{S}}
\newcommand{\pa}{\partial}
\newcommand{\eps}{\epsilon}
\newcommand{\tn}{\textbf{n}}
\newcommand{\Ga}{\Gamma}
\title{The Friedland-Hayman inequality and Caffarelli's contraction theorem}
\date{}     
\author{Thomas Beck\thanks{The first author was supported in part by NSF Grant 2042654.} \and David Jerison\thanks{The second author was supported in part by NSF Grant 1500771, a Simons Fellowship, a Guggenhein Fellowship, and Simons Foundation Grant (601948, DJ).}}
\date{\today}                                         
\begin{document}
\maketitle

\vspace{-0.3in}

\begin{center}
{\em In memory of Jean Bourgain}
\end{center}

{\bf Abstract:} \ 
The Friedland-Hayman inequality is a sharp inequality 
concerning the growth rates of homogeneous, harmonic functions with Dirichlet
boundary conditions on complementary cones dividing Euclidean space into
two parts.  In this paper, we prove an analogous inequality in which one divides a convex cone into two parts, placing Neumann conditions on the boundary of the convex cone, and Dirichlet conditions  on the interface.  This analogous inequality was already proved by us jointly with Sarah Raynor.  Here we present a new  proof that permits us to characterize the case of equality.   
In keeping with the two-phase free boundary theory introduced by Alt, Caffarelli, and Friedman, such an improvement can be expected to yield further regularity in
free boundary problems.

\section{Introduction}

The Friedland-Hayman inequality \cite{FH} is a sharp inequality 
concerning the growth rates of homogeneous harmonic functions with Dirichlet
boundary conditions on complementary cones dividing Euclidean space into
two parts.  It plays a crucial role in the interior regularity theory
of two-phase free boundary problems, as developed by Alt, Caffarelli, and Friedman
\cite{ACF}.   In this paper, we prove an analogous inequality in which
one divides a convex cone into two parts, placing Neumann
conditions on the boundary of the convex cone, and Dirichlet conditions
on the interface.  This analogous inequality was already proved in \cite{BJR}
and leads to regularity of two-phase
free boundaries at points near a fixed boundary with Neumann conditions 
in convex domains.  Here we present a new  proof of independent interest
that leads, in addition, to the characterization of the case of equality.
In keeping with the theory introduced in \cite{ACF} (see also \cite{CaS}),
such an improvement should ultimately yield further regularity properties of the free boundary.

The Friedland-Hayman inequality can be stated as follows.
\begin{thm} \label{thm:Friedland-Hayman} \emph{\cite{FH}} \ Let $u_1$ and $u_2$ be non-negative, H\"older continuous 
functions defined on $\R^n$, with $u_1u_2\equiv0$, and harmonic where they are positive, that is,
$\Delta u_i(x) = 0$ whenever $u_i(x)>0$.  If $u_i$ is homogeneous of
degree $\alpha_i$, then 
\[
\alpha_1  + \alpha_2 \ge 2.
\]
\end{thm}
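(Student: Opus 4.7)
The plan is to reduce the inequality, via separation of variables, to a sharp lower bound on a sum of first Dirichlet eigenvalues on $S^{n-1}$, and then handle that problem by spherical rearrangement followed by an explicit one-variable analysis.

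First, I would write $u_i(x) = r^{\alpha_i}\phi_i(\theta)$ with $r = |x|$ and $\theta = x/|x| \in S^{n-1}$. Computing $\Delta u_i = 0$ on the positivity set in polar coordinates turns the problem into the spherical eigenvalue equation
\[
-\Delta_{S^{n-1}} \phi_i = \lambda_i \phi_i \quad \text{on } \Omega_i := \{\phi_i > 0\}, \qquad \phi_i = 0 \text{ on } \partial \Omega_i,
\]
with $\lambda_i = \alpha_i(\alpha_i + n - 2)$. Since $\phi_i > 0$ inside $\Omega_i$, this $\lambda_i$ is in fact the first Dirichlet eigenvalue of $\Omega_i$ on the sphere. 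Inverting the quadratic gives $\alpha_i = \alpha(\lambda_i)$ where
\[
\alpha(\lambda) = \tfrac{1}{2}\left(-(n-2) + \sqrt{(n-2)^2 + 4\lambda}\right)
\]
is strictly increasing and satisfies $\alpha(n-1) = 1$, the latter being the hemisphere case. The hypothesis $u_1 u_2 \equiv 0$ forces $\Omega_1 \cap \Omega_2 = \emptyset$.

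Next, I would apply Sperner's spherical symmetrization: replacing each $\phi_i$ by its symmetric decreasing rearrangement on the geodesic cap $\Omega_i^*$ of the same spherical measure preserves the $L^2$ norm while not increasing the Dirichlet energy. Consequently $\lambda(\Omega_i^*) \le \lambda_i$, and since $\alpha$ is increasing,
\[
\alpha_1 + \alpha_2 \;\ge\; \alpha(\lambda(\Omega_1^*)) + \alpha(\lambda(\Omega_2^*)).
\]
By domain monotonicity of Dirichlet eigenvalues, the two caps can be further enlarged until they become antipodal caps with opening angles satisfying $\theta_1 + \theta_2 = \pi$, which only decreases each eigenvalue. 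It therefore suffices to prove the sharp one-parameter inequality
\[
\alpha(\lambda(\theta)) + \alpha(\lambda(\pi-\theta)) \ge 2 \qquad \text{for all } \theta \in (0,\pi),
\]
where $\lambda(\theta)$ denotes the first Dirichlet eigenvalue of the spherical cap of opening angle $\theta$.

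Finally, I would treat this one-variable inequality by ODE methods. In the variable $s = \cos\theta$, the spherical Laplacian acting on zonal functions reduces to a Gegenbauer (associated Legendre) operator, and $\lambda(\theta)$ is characterized by the condition that the solution regular at $s=1$ vanish at $s = \cos\theta$. Setting $F(\theta) := \alpha(\lambda(\theta))$, one has $F(\pi/2) = 1$, so the desired inequality is equivalent to saying that the symmetrized function $\theta \mapsto F(\theta) + F(\pi-\theta)$ attains its minimum at $\theta = \pi/2$. I expect this last step to be the main obstacle: the concavity of $\alpha$ in $\lambda$ pushes in the wrong direction, so there is no concavity/convexity shortcut, and one must exploit the precise $\theta$-dependence of $\lambda(\theta)$. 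The route followed by Friedland and Hayman is to differentiate the Gegenbauer ODE in the spectral parameter and combine Sturm comparison with careful sign tracking at $\theta = \pi/2$; together with the reductions above, this yields $\alpha_1 + \alpha_2 \ge 2$.
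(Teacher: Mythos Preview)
Your outline is correct and is essentially the original Friedland--Hayman argument: reduce to spherical Dirichlet eigenvalues, apply Sperner's symmetrization to pass to geodesic caps, and then handle the remaining one-parameter problem on complementary caps by ODE analysis. You are right that this last step is where the real work sits; your description of the method (differentiating in the spectral parameter and using Sturm comparison near $\theta=\pi/2$) is accurate, though of course in a fully written proof you would have to carry it out rather than invoke it.

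The paper, however, proves Theorem~\ref{thm:Friedland-Hayman} by a genuinely different route, following Beckner--Kenig--Pipher. Instead of staying on $S^{n-1}$, it multiplies the cone by $\R^N$, lets $N\to\infty$, and shows (Proposition~\ref{prop:main}) that $\alpha_i$ is bounded below by the first \emph{Gaussian} Dirichlet eigenvalue $\lambda_g(\Omega_i)$ on the cone itself. From there it applies Gaussian symmetrization (Ehrhard, Theorem~\ref{thm:BCF}) rather than Sperner's spherical symmetrization, reducing to complementary half-spaces; the one-variable endgame is then the Gaussian inequality $\lambda_g(H_+)+\lambda_g(H_-)\ge 2$ of Beckner--Kenig--Pipher (Theorem~\ref{thm:BKP}), which involves Hermite-type operators rather than Gegenbauer ones. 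What this buys: the Gaussian one-variable problem is arguably cleaner than the cap problem you face, and, more importantly for the paper's purposes, the whole argument transports verbatim to the convex-cone setting of Theorem~\ref{thm:main} via Caffarelli's contraction theorem---something your spherical route does not obviously do. Conversely, your approach is more self-contained on the sphere and avoids the limiting procedure of Proposition~\ref{prop:main}.
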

Equality holds if and only if the two functions are (up to constant multiples)
the positive and negative parts of a linear function.  In other words,
after rigid motion, 
\[
u_1(x) = c_1 x_1^+; \quad u_2(x) = c_2 x_1^-, \quad c_1 > 0 , \ c_2>0,
\]
with $x_1$ the first coordinate of $x$ and 
$u^\pm (x) : = \max (0, \pm u(x))$.  This was proved in \cite{ACF} (Lemma 6.6)  in dimension 2.  Also, in Remark 6.1 of that paper, the authors show that the characterization
of equality is valid in all dimensions,  provided one knows that the case of equality in a rearrangment theorem on the sphere due to Sperner \cite{Spe} is acheived only for rotationally symmetric caps. This
case of equality in Sperner's rearrangement theorem was subsequently proved by Brothers and Ziemer \cite{BZ}. 

To state our main theorem, consider a convex, open cone $\Ga$,
and two nonempty, disjoint, open, connected, conic subsets $\Ga_\pm$, 
that is,
\[
\bar \Ga_+ \cup \bar \Ga_- \subset \bar \Ga, \quad \Ga_+ \cap \Ga_- = \emptyset, \quad \Ga_\pm \neq \emptyset.
\]
Define the Neumann and Dirichlet portions of the boundary, $\gamma_N^\pm$ and 
$\gamma_D^\pm$,  by 
\[
\gamma_N^\pm = \pa \Ga \cap \pa \Ga_\pm, \quad \mbox{and} \quad \gamma_D^\pm : = \Ga \cap \pa \Ga_\pm, \quad \mbox{so that} \quad \pa \Ga_\pm = \gamma_N^\pm \cup \gamma_D^\pm.
\]
\begin{thm}  \label{thm:main} (Friedland-Hayman inequality for convex cones) Suppose that $\Ga\subset\R^n$ is an open,
convex cone containing two disjoint, open, connected cones $\Ga_\pm$ as above.
Let $u_\pm$ denote the unique (up to constant
multiples) positive, harmonic function 
on $\Ga_\pm$ that is  homogeneous of positive degree and
satisfies the mixed boundary conditions, $u_\pm = 0$ on $\gamma_D^\pm$ and $(\pa/\pa \nu) u_\pm = 0$  on $\gamma_N^\pm$ in the weak sense.   If the degree of $u_\pm$ is denoted $\alpha_\pm$, then
\[
\alpha_+ + \alpha_- \ge 2.
\]
Moreover, if equality holds, then after rotation, there is an open, convex
cone $\Ga' \subset \R^{n-1}$ such that
\[
\Ga = \R\times \Ga', \quad \Ga_\pm = \{\pm x_1 > 0\} \times \Ga', \quad \mbox{and} \quad u_\pm(x) = c_\pm x_1^\pm
\]
for some constants $c_\pm >0$. 
\end{thm}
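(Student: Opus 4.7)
The plan is to translate the problem into a mixed Dirichlet--Neumann principal eigenvalue problem on the unit sphere and then to derive the sharp inequality by coupling the two eigenfunctions through an optimal transport map whose contraction property is supplied by Caffarelli's theorem. Separating variables $u_\pm(r\theta) = r^{\alpha_\pm}\phi_\pm(\theta)$ with $\theta\in\Omega_\pm := \Gamma_\pm\cap S^{n-1}$ converts harmonicity of $u_\pm$ into the eigenvalue equation $-\Delta_{S^{n-1}}\phi_\pm = \lambda_\pm\phi_\pm$ with $\lambda_\pm = \alpha_\pm(\alpha_\pm+n-2)$, together with Dirichlet data on the interface $\partial\Omega_\pm\cap\Omega$ and Neumann data on the Neumann arc $\partial\Omega_\pm\cap\partial\Omega$, where $\Omega := \Gamma\cap S^{n-1}$. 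Positivity and homogeneity identify $\phi_\pm$ as the first mixed eigenfunction. Since $\Gamma$ is convex, $\Omega$ is geodesically convex on the sphere, and the target $\alpha_+ + \alpha_- \geq 2$ becomes a sharp joint lower bound for the principal mixed eigenvalues of $\Omega_+$ and $\Omega_-$ inside $\Omega$, which should be achieved precisely when the configuration is the spherical image of the product model.

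The central idea is to replace the intricate geometry of $\Omega_\pm$ inside $\Omega$ by a flat Gaussian model in which $\alpha_+ + \alpha_- \geq 2$ reduces to a one-dimensional half-line computation. Concretely, I would build Brenier transport maps $T_\pm$ from a common half-space Gaussian reference to appropriately weighted measures associated with $\phi_\pm$ on $\Omega_\pm$, chosen so that the Dirichlet interfaces of $T_+$ and $T_-$ correspond to a common reference hyperplane. Caffarelli's contraction theorem, applied to this log-concave setting that inherits convexity from $\Omega$, ensures that each $T_\pm$ is $1$-Lipschitz. Pulling back gradients along the contraction yields estimates for the Dirichlet energies of $\phi_\pm$ in terms of flat Gaussian integrals; separating the Gaussian coordinate transverse to the reference hyperplane from the tangential coordinates, the transverse direction contributes a one-dimensional half-line eigenvalue problem whose first exponent equals $1$, while the tangential directions are nonnegative. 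Summing the two transverse exponents produces $\alpha_+ + \alpha_- \geq 2$.

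The main obstacle will be constructing the pair $(T_+, T_-)$ so that their Dirichlet interfaces align, in order that the interface contributions from integration by parts cancel across the two pieces. Here the geodesic convexity of $\Omega$ is indispensable, as it permits a consistent choice of separating hyperplane for both transports; this is the feature that fails on a general sphere and forces the original Friedland--Hayman proof to rely instead on Sperner-type symmetrization.

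For the equality case, saturation forces simultaneous equality in Caffarelli's $1$-Lipschitz bound and in the one-dimensional half-line Rayleigh quotient. Rigidity in Caffarelli's theorem implies that each $T_\pm$ is affine and isometric in the transverse direction, forcing $\Omega$ to split as a geodesic product $I\times\Omega'$ with $\Omega'\subset S^{n-2}$ geodesically convex; rigidity in the one-dimensional problem then forces $\phi_\pm$ to depend only on the arc-length on $I$ and to be proportional to $\cos t$ and $\sin t$ on the two halves of $I$. Dehomogenizing, this is exactly $\Gamma = \R\times\Gamma'$, $\Gamma_\pm = \{\pm x_1 > 0\}\times\Gamma'$, and $u_\pm(x) = c_\pm x_1^\pm$, as claimed.
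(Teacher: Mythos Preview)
Your outline has the right high-level architecture (translate to an eigenvalue problem, invoke Caffarelli's contraction theorem, use rigidity for the equality case), but the transport step as you have written it will not go through, and you are missing a genuinely nontrivial intermediate reduction.

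First, you propose building two separate Brenier maps $T_\pm$ from half-space Gaussian references onto ``weighted measures associated with $\phi_\pm$ on $\Omega_\pm$'' and then invoking Caffarelli's theorem.  But Caffarelli's theorem requires the target measure to be the Gaussian restricted to a \emph{convex} set (times a log-concave density).  The pieces $\Gamma_\pm$ are in general \emph{not} convex---only the ambient cone $\Gamma$ is.  So there is no reason for a transport onto $\Gamma_+$ (or onto $\Omega_+$ on the sphere) to be a contraction.  The paper's key move, which your sketch does not contain, is to use a \emph{single} transport $T:\R^n\to\Gamma$ onto the convex cone $\Gamma$ equipped with the restricted Gaussian; then both test functions $w_\pm$ on $\Gamma_\pm$ are pulled back by the same $T$, and their supports are automatically disjoint in $\R^n$ with no alignment issue to resolve.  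After this pullback one is \emph{not} yet on half-spaces: one still has two disjoint open sets in $\R^n$, and the paper then invokes Erhard's Gaussian symmetrization and the Beckner--Kenig--Pipher one-dimensional result to finish.  Your ``transverse half-line eigenvalue equals $1$'' shortcut skips both of these steps.

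Second, you propose to apply Caffarelli's theorem directly in the spherical setting.  No such contraction theorem is available on $S^{n-1}$; the paper in fact poses it as an open problem in its final section.  What the paper does instead is pass from the spherical eigenvalues $\lambda_\pm=\alpha_\pm(\alpha_\pm+n-2)$ to Gaussian Rayleigh quotients on the cone $\Gamma_\pm\subset\R^n$ by a nontrivial limiting argument (Proposition~\ref{prop:main}): one extends $u_+$ trivially to $\Gamma_+\times\R^N$, computes the spherical Rayleigh quotient in $\R^{n+N}$, rescales, and lets $N\to\infty$ so that the spherical weight $(1-|z|^2/N)^{N/2-1}$ converges to $e^{-|z|^2/2}$.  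This is what produces the Gaussian setting in which the Euclidean Caffarelli theorem applies.  Your outline contains no analogue of this step, and without it the contraction theorem is being invoked in a setting where it has not been proved.
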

The original Friedland-Hayman inequality is the case $\Ga  = \R^n$
with $\gamma_N^\pm  = \emptyset$.   If $\Ga$ is a half space, then 
the result also follows from the original Friedland-Hayman inequality by an
argument using reflection.   The proof of the inequality stated in Theorem \ref{thm:main} in our joint work with 
Sarah Raynor \cite{BJR} uses the L\'evy-Gromov isoperimetric
inequality on Ricci non-negative manifolds.  As we mentioned
earlier, the new proof here will permit us to characterize the case of equality.

The exponents $\alpha_\pm$ can be expressed in terms
of the lowest eigenvalues of a mixed boundary problem on the spherical
cross sections $\Ga_\pm \cap \mS^{n-1}$.  The relationship
is given by \eqref{eq:sphere-eigenvalue} below. 
Those eigenvalues have a variational characterization which is 
important to the applications to the free boundary regularity, that
is, to the monotonicity formulas of \cite{ACF} and \cite{BJR}.  Variational
characterizations are also important to this proof, as we shall see.

By rearrangement, the
Friedland-Hayman inequality reduces to a family of
one-dimensional problems parametrized by the dimension $n$.  
On the other hand, 
the inequality in dimension $n+1$ implies the inequality in dimension $n$,
as one can see by  considering the product of a cone with a line.
Thus the one-dimensional inequality gets harder to prove as $n$ increases.
Beckner, Kenig and Pipher \cite{BKP} gave a more conceptual proof of the 
original Friedland-Hayman theorem that relies on this rearrangement, but
circumvents the Brothers-Ziemer result.     They took the limit as $n$ tends
to infinity and identified and fully analyzed the problem one obtains
in the limit, an extremal eigenvalue problem on the real line with a gaussian weight.
The limiting ``infinite-dimensional"
problem dominates all the finite dimensional ones, and its 
extremal sets are the half lines $x_1>0$, $x_1 < 0$. See Section 12.2 in \cite{CaS} for the details of this approach.

Rearrangement cannot be used to solve the problem on convex domains.
But it is possible to take the dimension to infinity
by considering cones of
the form $\Ga_\pm \times \R^N$ as $N\to \infty$.  Using this device we will
prove the following key proposition.

\begin{prop} \label{prop:main}  If $\alpha_+$ is the characteristic exponent associated with $\Ga_+$ in
Theorem \ref{thm:main}, then 
\[
\alpha_+ \ge 
\inf_{f\in Y_+} \frac{\int_{\Ga_+}\left| \nabla f(x) \right|^2 e^{-|x|^2/2} \, dx}{\int_{\Ga_+} f(x)^2 e^{-|x|^2/2} \, dx} \, ,
\]
with $Y_+ = \{f \in C_0^\infty(\R^n): f(x) = 0 \ \mbox{for} \ x\in \gamma_D^+\}$.
Evidently, the same result holds with $+$ replaced by $-$. 
\end{prop}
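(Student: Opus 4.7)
The plan is to use the product cone $\tilde{\Ga}_+^N := \Ga_+ \times \R^N$ suggested in the introduction. I begin by observing that $u_+(x)$, viewed as a function on $\tilde{\Ga}_+^N$ constant in the $y$-variables, is harmonic, positive, satisfies the mixed boundary conditions on $(\gamma_D^+ \cup \gamma_N^+) \times \R^N$, and is homogeneous of degree $\al_+$ in $(x,y)$ since $u_+(rx) = r^{\al_+} u_+(x)$. Hence its restriction to the spherical cross-section $\tilde{\Om}^N := \tilde{\Ga}_+^N \cap \mS^{n+N-1}$ is a positive eigenfunction of the mixed-BC spherical Laplacian, and by uniqueness of positive eigenfunctions it is the first eigenfunction, with eigenvalue $\tilde{\la}^N = \al_+(\al_+ + n + N - 2)$; in particular $\tilde{\la}^N/N \to \al_+$ as $N \to \infty$. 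The bridge to the Gaussian problem is that the $x$-marginal of the uniform measure on $\mS^{n+N-1}$ has density proportional to $(1-|x|^2)^{(N-2)/2}$, which under the rescaling $u = \sqrt{N}\,x$ converges pointwise to the Gaussian density $e^{-|u|^2/2}$ on $\R^n$.

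The core computation is the identity $\int_{\Ga_+} |\nabla u_+|^2 e^{-|x|^2/2}\,dx = \al_+ \int_{\Ga_+} u_+^2 e^{-|x|^2/2}\,dx$, which gives that the Gaussian Rayleigh quotient of $u_+$ equals $\al_+$. Writing $u_+(r\omega) = r^{\al_+} \phi_+(\omega)$ for $\phi_+$ the first mixed-BC spherical eigenfunction on $\Om_+ := \Ga_+ \cap \mS^{n-1}$, the angular identity $\int_{\Om_+} |\nabla_\mS \phi_+|^2 = \al_+(\al_+ + n - 2) \int_{\Om_+} \phi_+^2$ together with the radial identity $\int_0^\infty r^{k+1} e^{-r^2/2}\,dr = k \int_0^\infty r^{k-1} e^{-r^2/2}\,dr$ (integration by parts) gives the claim by direct computation. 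Equivalently, $u_+$ is an eigenfunction of the Ornstein--Uhlenbeck operator $-\Delta + x\cdot\nabla$ of eigenvalue $\al_+$ on $\Ga_+$, since $u_+$ is harmonic and $x\cdot\nabla u_+ = \al_+ u_+$ by Euler's identity, with the correct mixed boundary conditions.

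To conclude, I approximate $u_+$ by a family $f_{R,\eps} \in Y_+$ obtained via a smooth cutoff $\chi_R$ at $|x| = R$ (valid by the Gaussian decay despite $u_+$'s polynomial growth), multiplication by a cutoff $\chi_\eps$ vanishing in an $\eps$-neighborhood of $\gamma_D^+$ (to preserve the Dirichlet condition under mollification), and convolution with a smooth mollifier; dominated convergence then yields that the Gaussian Rayleigh quotient of $f_{R,\eps}$ converges to $\al_+$ as $R \to \infty$ and $\eps \to 0$, so the infimum in the proposition is $\leq \al_+$, which is the desired inequality. The main obstacle is controlling the gradient error from the Dirichlet cutoff: the extra term $u_+^2 |\nabla \chi_\eps|^2$, integrated over the tubular $\eps$-neighborhood of $\gamma_D^+$, is of order $\eps^{2\beta - 1}$ where $\beta \ge 1$ is the vanishing order of $u_+$ on $\gamma_D^+$, and this tends to zero. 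The product-cone framework of the first paragraph offers a cleaner alternative via a $\Gamma$-liminf statement on the rescaled spherical Rayleigh problem applied to the sequence of first eigenfunctions $F_N = u_+|_{\tilde{\Om}^N}$, bypassing direct approximation in $\R^n$.
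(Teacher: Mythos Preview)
Your proof is correct and takes a genuinely more direct route than the paper's. The paper actually carries out the product-cone construction of your first paragraph all the way through: it writes the Rayleigh quotient of $u_N$ on the cross-section $\Sigma_m\subset\mS^{m-1}$, integrates out the $\R^N$-directions to obtain the weight $(1-|x|^2)^{N/2-1}$ on $\Ga_+\cap B_1$, rescales by $\sqrt{N}$, and then sends $N\to\infty$ so that the weight converges to $e^{-|z|^2/2}$. Your second paragraph short-circuits this entire limit by noting directly that $u_+$ is an Ornstein--Uhlenbeck eigenfunction of eigenvalue $\al_+$ (since $\Delta u_+=0$ and $x\cdot\nabla u_+=\al_+ u_+$ by Euler), or equivalently by the polar-coordinate computation combined with the identity $\int_0^\infty r^{k+1}e^{-r^2/2}\,dr=k\int_0^\infty r^{k-1}e^{-r^2/2}\,dr$; either way the Gaussian Rayleigh quotient of $u_+$ is \emph{exactly} $\al_+$, with no limiting procedure. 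This is cleaner and explains intrinsically why the Gaussian weight is the correct one. What the paper's route buys is that it makes the Beckner--Kenig--Pipher ``sphere in infinite dimensions'' heuristic explicit, which motivates why Caffarelli's gaussian contraction theorem enters the picture later.

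On the approximation step: the paper does only a radial cutoff on $\sqrt{N}-1<|z|<\sqrt{N}$ and calls the result a ``legitimate test function'', implicitly passing to the weighted $H^1$-closure of $Y_+$ rather than producing a literal element of $C_0^\infty(\R^n)$. Your additional cutoff $\chi_\eps$ near $\gamma_D^+$ is therefore not really needed, which is fortunate because your claim that the vanishing order satisfies $\beta\ge1$ is not justified in general: at nonsmooth points of $\gamma_D^+$, or at junctions of $\gamma_D^+$ with $\gamma_N^+$, one can have $\beta<1$. This does not damage the argument, since the radially truncated $u_+$ already lies in the weighted Sobolev closure of $Y_+$ (it vanishes on $\gamma_D^+$ and has finite Gaussian Dirichlet energy by your polar computation), so the infimum is at most $\al_+$ regardless.
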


Note that the weight in the variational expression on the right side of the inequality
in Proposition \ref{prop:main} is the restriction to $\Ga_+$ of 
\[
e^{-|x|^2/2} e^{-F(x)} \, dx, \quad F(x) = 
\begin{cases}  
0  & \ x\in \Ga \\
 \infty & \ x\in \R^n \setminus \Ga.
\end{cases}
\]
Because $\Ga$ is convex, $F$ is a generalized convex function, that is, a convex function allowing for the value $+\infty$.   
Put another way, $e^{-F}$ is a generalized log-concave function.

Because our measure is ``more log concave" than the gaussian,
we will be able to invoke a variant of Caffarelli's contraction theorem
for the Brenier optimal transport mapping.  Recall that Brenier's mapping
can be characterized as follows.

\begin{thm}[Brenier \cite{Br}] \label{thm:Brenier}
Let $\mu$, $\nu$ be positive measures on $\mathbb{R}^{n}$ and with finite second order moments.  Suppose also that $\mu$ is  absolutely continuous with respect to the Lebesgue measure. Then, there exists a convex function $\varphi:\mathbb{R}^{n}\to\mathbb{R}$ such that $T = \nabla \varphi:\mathbb{R}^{n}\to\R^{n}$ transports $\mu$ onto $c\, \nu$. That is,  $\mu(T^{-1}(E)) = c\, \nu(E)$, with normalizing constant
$c = \mu(\R^n)/\nu(\R^n)$.   
\end{thm}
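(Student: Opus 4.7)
The plan is to derive the Brenier map from the Monge--Kantorovich optimal transport problem with quadratic cost. After rescaling $\nu$ by $c$ so that $\mu$ and $c\nu$ share the same total mass, I would minimize
\[
\pi \mapsto \int_{\R^n \times \R^n} \tfrac{1}{2}|x-y|^2 \, d\pi(x,y)
\]
over couplings $\pi$ of $\mu$ and $c\nu$. The finite second moment hypothesis expands the cost as two fixed marginal integrals plus $-\int \langle x,y\rangle\, d\pi$, so the problem is equivalent to maximizing $\int \langle x,y\rangle \, d\pi$. Existence of an optimizer $\pi^*$ follows from the usual weak compactness: the set of couplings is tight because the marginals are, and the bilinear functional is upper semicontinuous after a standard second-moment truncation.

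Next I would invoke Kantorovich duality to produce lower semicontinuous potentials $\varphi, \psi$ satisfying $\varphi(x) + \psi(y) \ge \langle x,y\rangle$ everywhere, with equality $\pi^*$-almost everywhere on the support of $\pi^*$. Replacing $\varphi$ by $\varphi(x) := \sup_y(\langle x,y\rangle - \psi(y))$---a Legendre-type regularization that only tightens the duality inequality---makes $\varphi$ a supremum of affine functions, hence convex and lower semicontinuous. A finite swapping argument along cycles $(x_1,y_1), \ldots, (x_k,y_k)$ in the support of $\pi^*$ shows that support is cyclically monotone, and Rockafellar's theorem then identifies any cyclically monotone set as a subset of the graph of the subdifferential $\partial\varphi$ of a convex function $\varphi$.

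The final and crucial step is where the absolute continuity of $\mu$ enters. Convex functions are Lebesgue-almost-everywhere differentiable, so $\nabla\varphi(x)$ exists $\mu$-a.e., and at such points the subdifferential collapses to the singleton $\{\nabla\varphi(x)\}$. Thus $\pi^*$ is supported on the graph of the single-valued map $T = \nabla\varphi$, which forces $\pi^* = (\mathrm{id}\times T)_\#\mu$ and, after reading off the second marginal, $\mu(T^{-1}(E)) = c\,\nu(E)$. This extraction of a deterministic map from the cyclically monotone support is the main obstacle of the proof, and the unique place where absolute continuity of $\mu$ cannot be dispensed with; without it, Kantorovich duality yields only a coupling, not a transport map. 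Finiteness of $\varphi$ on all of $\R^n$ can be arranged after the fact by truncating against the quadratic majorant $|x|^2/2 + $ const on the $\mu$-null set where the potential could a priori be $+\infty$, without disturbing convexity or the pushforward property.
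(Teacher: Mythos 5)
This is Theorem~\ref{thm:Brenier}, which the paper quotes from Brenier's work \cite{Br} and uses as a black box; there is no proof in the paper to compare against. Your sketch is the standard modern argument (Kantorovich relaxation of the quadratic-cost problem, existence of an optimal plan by tightness and lower semicontinuity, cyclical monotonicity of its support, Rockafellar's theorem to produce a convex potential, and Lebesgue-a.e.\ differentiability of convex functions together with $\mu \ll \mathcal{L}^n$ to collapse the plan onto the graph of $\nabla\varphi$), and it is correct in all essential respects; this is exactly where the absolute continuity hypothesis is used, as you say.

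One small caveat: your final remark about ``truncating against the quadratic majorant'' to make $\varphi$ finite on all of $\R^n$ is not quite right as stated --- taking a pointwise minimum of a convex function with another function does not preserve convexity, and modifying $\varphi$ only on a $\mu$-null set could break convexity globally. The clean fixes are either (i) to note that $\varphi$ is finite $\mu$-a.e.\ (its $\mu$-integral is finite by duality), so its effective domain is a convex set whose interior contains $\operatorname{supp}\mu$ up to a Lebesgue-null boundary, which suffices for the pushforward statement; or (ii) in the situation the paper actually uses, where $\mu$ is the Gaussian with full support, to observe that a convex function finite on a dense subset of $\R^n$ has effective domain equal to all of $\R^n$, so $\varphi:\R^n\to\R$ automatically. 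With that adjustment the argument is complete.
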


The variant of Caffarelli's contraction theorem that we require can be stated 
as follows.
\begin{thm} \label{thm:Caf}  Let $K$ be a convex subset of $\R^n$ and let
$F$ be a convex function on $\R^n$. Set
\[
\mu = e^{-|x|^2/2}\, dx, \quad \nu  = e^{-F} 1_K \mu.
\]
Then the Brenier mapping $T = \nabla \varphi$ is 
a contraction:  $|T(x) - T(y)| \le |x-y|$.  Put another way,
$D^2\varphi$ has eigenvalues bounded above by $1$. 
\end{thm}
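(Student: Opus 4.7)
The first thing to notice is that $\nu$ has density $e^{-W}$ with respect to Lebesgue, where
\[
W(x) = \tfrac{1}{2}|x|^2 + F(x) + \chi_K(x),
\]
and $\chi_K$ is the convex indicator of $K$ (equal to $0$ on $K$, $+\infty$ outside). Thus $W - \tfrac12|x|^2 = F + \chi_K$ is a generalized convex function, so $\nu$ is uniformly log-concave, ``more log-concave than'' the Gaussian $\mu$, in the sense that $D^2 W \succeq I$ in the generalized sense. This is precisely the setting to which Caffarelli's original contraction theorem is designed to apply; the novelty here is only that the log-concave factor $e^{-F}1_K$ may be non-smooth and vanish on a set of positive measure, and the content of the proof is that the classical pointwise bound persists under these singular data.

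The plan is to reduce to the classical smooth case by approximation and then pass to the limit. Replace $G := F + \chi_K$ by a smooth convex approximation $G_\varepsilon$ with $G_\varepsilon \to G$ pointwise, obtained for instance by combining a Moreau--Yosida envelope (which produces a $C^{1,1}$ convex approximation) with mollification by a smooth radial kernel of small enough scale. The resulting probability measures $\nu_\varepsilon = c_\varepsilon^{-1} e^{-G_\varepsilon} \mu$ have $C^\infty$, strictly positive densities, and their potential $W_\varepsilon = \tfrac12|x|^2 + G_\varepsilon$ satisfies $D^2 W_\varepsilon \succeq I$ everywhere.

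For each $\varepsilon$, Caffarelli's original theorem applies to $(\mu, \nu_\varepsilon)$ and produces a Brenier map $T_\varepsilon = \nabla \varphi_\varepsilon$ with the pointwise bound $\|DT_\varepsilon(x)\|_{\mathrm{op}} \le 1$. Recall that the proof is a maximum-principle computation applied to the largest eigenvalue $\lambda(x)$ of $D^2\varphi_\varepsilon$: writing the Monge--Amp\`ere equation for the smooth problem as
\[
\log \det D^2 \varphi_\varepsilon(x) = W_\varepsilon(\nabla\varphi_\varepsilon(x)) - \tfrac12|x|^2 + \mathrm{const},
\]
one differentiates twice in the direction of a unit eigenvector for $\lambda$ at an interior maximum, and uses the convexity estimate $D^2 W_\varepsilon \succeq I$ to force $\lambda \le 1$.

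Finally, pass to the limit $\varepsilon \to 0$. Standard approximation arguments yield $\nu_\varepsilon \to \nu$ weakly with convergence of second moments, and stability of optimal transportation then gives $T_\varepsilon \to T$ in $L^2(\mu)$ (with $\varphi_\varepsilon \to \varphi$ locally uniformly up to additive constants). The pointwise $1$-Lipschitz bound is preserved in the limit, yielding $|T(x) - T(y)| \le |x-y|$, equivalently $D^2 \varphi \le I$ almost everywhere. The principal technical obstacle is the maximum-principle step in the non-compact setting: one must verify that the largest eigenvalue of $D^2 \varphi_\varepsilon$ actually attains a finite supremum, which requires a priori control on the quadratic growth of $\varphi_\varepsilon$ at infinity uniformly in $\varepsilon$. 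For this the uniform log-concavity $D^2 W_\varepsilon \succeq I$ and the uniform Gaussian moment bounds on $\nu_\varepsilon$ are essential.
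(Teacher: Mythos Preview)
Your approach is correct and shares the same template as the paper's proof (regularize, obtain the contraction bound for the regular problem, pass to the limit), but the two approximation schemes are genuinely different, and this changes where the real work sits.

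The paper approximates $K$ by a sequence of \emph{bounded}, smooth, strictly convex sets $K_j \subset K$, keeps $F$ as is, and works with second differences $\delta_e\varphi_j$ rather than $D^2\varphi_j$ directly. The choice of bounded $K_j$ is made precisely to dispose of the ``obstacle at infinity'' that you flag: because the image of $T_j = \nabla\varphi_j$ lies in the compact set $K_j$, the paper proves explicitly (its Lemma~2.1) that $\nabla\varphi_j(x)$ converges to the unique boundary point of $K_j$ with outward normal $x/|x|$ as $|x|\to\infty$, whence $\delta_e\varphi_j(x)\to 0$. This guarantees the maximum of $\delta_e\varphi_j$ is attained at a finite point, where Caffarelli's interior maximum-principle computation yields $\delta_e\varphi_j \le h^2$. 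Your scheme instead mollifies the full potential $F + \chi_K$ to a smooth convex $G_\varepsilon$ on all of $\R^n$, placing $(\mu,\nu_\varepsilon)$ exactly in the setting of Caffarelli's original theorem (the case $K=\R^n$), which you then invoke as a black box.

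Given that, your final paragraph is somewhat muddled. If you are citing Caffarelli's theorem for each fixed $\varepsilon$, the maximum-principle-at-infinity difficulty is already absorbed into that citation, and no uniformity in $\varepsilon$ is required anywhere: the Lipschitz constant is exactly $1$ for every $\varepsilon$, so the limit map inherits it automatically. All you actually need to verify is $\nu_\varepsilon \to \nu$ weakly with second moments (which holds since $e^{-G_\varepsilon}$ is monotone in $\varepsilon$ and uniformly bounded, assuming $F$ is bounded below on $K$) and stability of Brenier potentials under such convergence. The sentence about ``a priori control on the quadratic growth of $\varphi_\varepsilon$ at infinity uniformly in $\varepsilon$'' describes a problem you do not have.
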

Caffarelli's theorem is the case $K = \R^n$.  The Friedland-Hayman inequality 
will follow from Proposition \ref{prop:main} and Theorem \ref{thm:Caf} with $K$ a convex cone and $F \equiv 0$ on $K$.

De Philippis and Figalli \cite{DPF} have characterized 
the case of equality in the eigenvalue formulation of Caffarelli's theorem.
This result will permit us to characterize the case of equality in the Friedland-Hayman
inequality, as stated in Theorem \ref{thm:main}. 
Their theorem is stated here in our generalized setting.  
\begin{thm} \label{thm:DPF}
 Let $\varphi$ be as in Theorem \ref{thm:Caf}, and let
\[
0\leq \la_1(D^2\varphi(x)) \leq \cdots \leq \la_{n}(D^2\varphi(x)) \leq 1
\]
be the eigenvalues of $D^2\varphi(x)$. If for some $m$, $1\leq m \leq n$, 
$\la_{n-m+1}(D^2\varphi(x)) = 1$ for all $x\in\R^n$, then,
after translation and rotation, there is a convex set $K'\subset \R^{n-m}$ and a convex function
$G$ on $K'$ such that 
\[
K  = \R^m \times K'; \quad \nu = e^{-|x'|^2/2 -|y'|^2/2 -G(y')} 1_{K'}(y') \, dx' dy'
\]
with $x = (x',y') \in \R^m \times \R^{n-m}$.
\end{thm}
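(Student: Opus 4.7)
My plan is to adapt the approach of De Philippis and Figalli \cite{DPF}, which handles the case $K = \R^n$ of Caffarelli's theorem, to the generalized setting of Theorem \ref{thm:Caf}. I would proceed by induction on $m$: the $m=1$ statement contains the essential geometric content, and the general case will follow by peeling off one factor at a time.

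For the base case $m=1$, the starting point is the Monge-Amp\`{e}re equation satisfied by the Brenier potential $\varphi$: pushing $\mu = e^{-|x|^2/2}dx$ forward to $c\nu = c e^{-F}1_K e^{-|y|^2/2}dy$ under $T=\nabla\varphi$ gives, on the full-$\mu$-measure set $\Omega := \{x : \nabla\varphi(x) \in \mathrm{int}\, K\}$,
\[
\frac{|x|^2}{2} + \log c = \frac{|\nabla\varphi|^2}{2} + F(\nabla\varphi) + \log\frac{1}{\det D^2\varphi}.
\]
Theorem \ref{thm:Caf} places the eigenvalues of $D^2\varphi$ in $[0,1]$, so $\log(1/\det D^2\varphi)\ge 0$, with equality in a direction $e$ iff $D^2\varphi\,e = e$. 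The hypothesis now reads $\lambda_n(D^2\varphi(x))\equiv 1$, and the heart of the argument is to show that a top eigenvector $v(x)$ of $D^2\varphi(x)$ can be chosen to be a constant direction $v$. This is the main obstacle. I would approximate $\varphi$ by smooth convex potentials (e.g.\ replacing $1_K$ by $e^{-k\,\mathrm{dist}(\cdot,K)^2}$ and mollifying $F$), linearize the Monge-Amp\`{e}re equation along an eigendirection $e$ associated to $\lambda_n = 1$, and invoke the strong maximum principle on the resulting elliptic inequality for the linearized Monge-Amp\`{e}re operator, where convexity of both $F$ and $K$ produce non-negative contributions to the source term. The most delicate task will be to carry out this approximation in a topology strong enough to preserve the identity $\lambda_n\equiv 1$ in the limit; the convexity of $K$ itself only strengthens the inequality and introduces no new obstruction beyond those handled in \cite{DPF}.

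Once $v(x)\equiv v$, rotating so that $v = e_1$ forces $\partial^2_{11}\varphi\equiv 1$ and, by applying the same linearization to the mixed partials, $\partial^2_{1j}\varphi\equiv 0$ for $j\ge 2$, so that $\varphi(x_1,\tilde x) = x_1^2/2 + \psi(\tilde x)$ up to an affine term in $x_1$ absorbed by translation. Consequently $T(x_1,\tilde x) = (x_1, \nabla\psi(\tilde x))$, and $T_\#\mu$ splits as $e^{-|x_1|^2/2}dx_1 \otimes (\nabla\psi)_\#(e^{-|\tilde x|^2/2}d\tilde x)$. Matching this with $c\nu$ forces $K = \R\times\tilde K$ for some convex $\tilde K\subset\R^{n-1}$ and $F$ independent of $x_1$; the $\tilde x$-marginal of $\nu$ takes the form $c\,e^{-|\tilde x|^2/2 - G(\tilde x)}1_{\tilde K}(\tilde x)\,d\tilde x$ with $G$ convex on $\tilde K$, inherited from $F$. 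This settles the base case. To close the induction, I apply the induction hypothesis at level $m-1$ to $\psi$ on $\R^{n-1}$: since one of the $m$ unit eigenvalues of $D^2\varphi$ is accounted for by the split direction $e_1$, the remaining $m-1$ appear in $D^2\psi$, giving $\lambda_{(n-1)-(m-1)+1}(D^2\psi)\equiv 1$, and the induction hypothesis yields the desired product structure of $\psi$, hence of the original $\nu$ and $K$.
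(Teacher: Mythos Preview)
Your strategy diverges from the paper's, and the step you yourself flag as ``most delicate'' is in fact a genuine gap. You propose to approximate $\varphi$ by smoother potentials (replacing $1_K$ by $e^{-k\,\mathrm{dist}(\cdot,K)^2}$, etc.), then run a strong maximum principle on the linearized Monge--Amp\`ere operator to pin down a constant top eigenvector. The problem is that for each approximating target the Brenier potential $\varphi_k$ will satisfy $D^2\varphi_k\le I$, but there is no reason whatsoever for $\lambda_n(D^2\varphi_k)\equiv 1$ to hold at finite $k$; the hypothesis is a rigidity statement about the limit, not something stable under perturbation. So you cannot apply your maximum principle argument to the smooth approximants, and you have not explained how to run it directly on $\varphi$ with only the $C^2$ regularity available. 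Your claim that this ``introduces no new obstruction beyond those handled in \cite{DPF}'' is not accurate, because De Philippis--Figalli do not in fact argue via approximation and linearized maximum principle.

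The argument in \cite{DPF}, which the paper simply notes carries over verbatim to the present setting, is instead the following. Set $\Psi(x)=\tfrac12|x|^2-\varphi(x)$. By Theorem~\ref{thm:Caf}, $\Psi$ is convex, and the hypothesis $\lambda_n(D^2\varphi)\equiv 1$ gives $\det D^2\Psi\equiv 0$. After subtracting a linear function one may assume $\Psi\ge \Psi(0)=0$. An Alexandrov-type estimate then shows that the convex set $\Sigma=\{\Psi=0\}$ has no exposed point, hence contains a full line; convexity of $\Psi$ together with $\Psi\equiv 0$ on that line forces $\partial_{e_1}\Psi\equiv 0$ after rotation. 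This yields the splitting $T(x)=(x_1,x'-\nabla\Psi(x'))$ directly, with no approximation and no tracking of variable eigenvectors. The induction then proceeds as you describe; the paper appeals to Brascamp--Lieb to guarantee the $(n-1)$-dimensional marginal is again of the form $e^{-|x'|^2/2-G_1(x')}1_{K_1}\,dx'$ with $G_1$ convex and $K_1$ convex, though your more direct slicing argument for the product structure is also valid.
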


Our second application is to the case of equality of a Poincar\'e-Wirtinger type inequality: For a given convex domain $K\subset \R^n$, define $\mu_1(K)$ by
\begin{align} \label{eqn:PW}
\mu_1(K) = \inf_f  \left\{\displaystyle{\frac{\int_{K}\left|\nabla f(x)\right|^2 e^{-|x|^2/2} \, d x }{\int_{K}|f(x)|^2 e^{-|x|^2/2} \, dx}}\text{ }:\text{ } \int_{K} f(x) e^{-|x|^2/2} \, dx = 0 \right\}. 
\end{align}
Here the infimum is taken over function for which the expressions are finite.
The value $\mu_1(K)$ can also be viewed as the first non-zero eigenvalue of
$$
\begin{cases}
\text{div}\left(e^{-|x|^2/2} \nabla u\right)  =  -\mu e^{-|x|^2/2} u  \text{ in } K, \\
\frac{\pa u}{\pa n}  =  0  \text{ on } \pa K. 
\end{cases}
$$
In \cite{BCHT}, Brandolini, Chiacchio, Henrot, and Trombetti show that if $K\subset \R^n$ has $C^2$-smooth boundary, then
\begin{align} \label{eqn:PW1}
\mu_1(K) \geq  \mu_1(\R^n) = \mu_1(\R) = 1.
\end{align}
In \cite{BCKT}, this inequality is shown to hold for any convex planar domain, and if $K\subset \R^2$ is contained in a strip, then equality in \eqref{eqn:PW1} holds precisely when $K$ is itself a strip. Here, we use Theorems \ref{thm:Caf} and \ref{thm:DPF} to prove the theorem in the non-smooth case and describe fully the case of equality.
\begin{thm} \label{thm:PW}
For any convex domain $K\subset \R^n$, and $\mu_1(K)$ as in \eqref{eqn:PW}, we have
\begin{align*}
\mu_1(K) \geq \mu_1(\R) = 1.
\end{align*}
Moreover, $($up to a rotation$)$ equality holds precisely when $K$ is of the form $K = \R\times K'$, for a convex domain $K' \subset \R^{n-1}$. 
\end{thm}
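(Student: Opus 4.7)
The plan is to run the Brenier optimal transport argument used elsewhere in the paper, but for the Neumann-type Gaussian Poincar\'e quotient \eqref{eqn:PW}. Apply Theorem~\ref{thm:Caf} with this convex $K$ and $F\equiv 0$ to obtain a Brenier map $T = \nabla\varphi : \R^n \to \R^n$ pushing $\mu = e^{-|x|^2/2}\,dx$ forward to a constant multiple of $\nu = 1_K\, \mu$, with $0 \preceq D^2\varphi(x) \preceq I$ at $\mu$-a.e.\ $x$. In particular, $T$ maps $\R^n$ into $K$ and is a contraction.

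For the lower bound $\mu_1(K)\ge 1$, pick any admissible $f$ in \eqref{eqn:PW} and set $g := f\circ T$ on $\R^n$. The push-forward identity gives
\[
\int_{\R^n} g\,d\mu = c\int_K f\, e^{-|y|^2/2}\,dy = 0, \qquad \int_{\R^n} g^2\,d\mu = c\int_K f^2\, e^{-|y|^2/2}\,dy,
\]
and the Alexandrov chain rule gives $\nabla g(x) = D^2\varphi(x)\,(\nabla f\circ T)(x)$ a.e., which combined with the eigenvalue bound $D^2\varphi \preceq I$ yields $|\nabla g(x)|^2 \le |(\nabla f\circ T)(x)|^2$. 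Integrating and again using the push-forward,
\[
\int_{\R^n} |\nabla g|^2\,d\mu \le c\int_K |\nabla f|^2\, e^{-|y|^2/2}\,dy.
\]
Since $g$ has zero Gaussian mean, the classical one-dimensional Gaussian Poincar\'e inequality (which tensorises to $\mu_1(\R^n)=1$) gives $\int g^2\,d\mu \le \int |\nabla g|^2\,d\mu$. Combining these estimates and cancelling $c$ yields $\mu_1(K)\ge 1$.

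For the equality case, assume $\mu_1(K)=1$ and let $f$ attain the infimum, so every inequality above is an equality. Saturation of the Gaussian Poincar\'e inequality on $\R^n$ by a zero-mean $g$ forces $g$ to lie in the first non-trivial eigenspace of the Ornstein--Uhlenbeck operator, so $g(x) = a\cdot x$ for some fixed $a\in\R^n\setminus\{0\}$. Saturation of $|D^2\varphi(x)v| \le |v|$ with $v = (\nabla f\circ T)(x)$, together with $0\preceq D^2\varphi\preceq I$, forces $D^2\varphi(x)\,(\nabla f\circ T)(x) = (\nabla f\circ T)(x)$ a.e.; via the chain rule this reads $(\nabla f\circ T)(x) = \nabla g(x) = a$ a.e. Pushing forward gives $\nabla f \equiv a$ a.e.\ on $K$, so $f$ is affine, and the fixed vector $a$ is an eigenvector of $D^2\varphi(x)$ with eigenvalue $1$ at $\mu$-a.e.\ $x$. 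Hence $\lambda_n(D^2\varphi(x)) = 1$ a.e., and Theorem~\ref{thm:DPF} with $m=1$ produces, after a rotation, a splitting $K = \R\times K'$ with $K'\subset \R^{n-1}$ convex; the rotation may be chosen so that the distinguished $\R$-factor is aligned with $a$, since $a$ lies in the eigenvalue-$1$ direction that Theorem~\ref{thm:DPF} identifies. The converse is immediate: on $K=\R\times K'$ the function $f(x_1,x') = x_1$ has Gaussian mean zero and, by Fubini, Rayleigh quotient equal to $\mu_1(\R)=1$.

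The main obstacle is the low regularity of the Brenier map. The potential $\varphi$ is only Alexandrov-twice-differentiable, so the chain rule $\nabla(f\circ T) = D^2\varphi\,(\nabla f\circ T)$ and the integration-by-substitution must be justified on full-measure sets, with approximation to handle general weighted-$H^1$ competitors $f$ and to ensure that a minimiser in the equality case inherits enough regularity to perform the pointwise saturation analysis. A secondary bookkeeping issue is synchronising the direction $a$ with the splitting supplied by Theorem~\ref{thm:DPF}; once one notes that $\nabla f\equiv a$ forces $a$ into the top eigenspace of $D^2\varphi$ at $\mu$-a.e.\ point, the two rotations can be chosen compatibly.
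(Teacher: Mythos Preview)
Your argument for the inequality $\mu_1(K)\ge 1$ is essentially identical to the paper's: pull back a competitor (the paper uses the minimiser, you use an arbitrary admissible $f$) through the Brenier map $T=\nabla\varphi$, use the push-forward to preserve the zero-mean condition and the $L^2$ norm, and use the contraction $D^2\varphi\preceq I$ to control the Dirichlet integral.

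For the equality case your route is genuinely different. The paper argues by contrapositive: if $K$ contains no line, Theorem~\ref{thm:DPF} supplies $\eps>0$ and a set $U$ of positive measure on which every eigenvalue of $D^2\varphi$ is at most $1-\eps$; feeding this strict contraction into the gradient comparison for the minimiser $u$ gives $\mu_1(K)>1$. You instead work directly: from equality you deduce that $g=f\circ T$ saturates the Gaussian Poincar\'e inequality on $\R^n$, hence $g(x)=a\cdot x$; saturation of $|D^2\varphi\,v|\le|v|$ then forces $D^2\varphi(x)a=a$ a.e., and you invoke Theorem~\ref{thm:DPF} in the forward direction. Your approach yields the extra information that the extremal $f$ is affine, at the cost of needing the characterisation of equality in the Gaussian Poincar\'e inequality on $\R^n$ and the extra bookkeeping to align the splitting direction with $a$. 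The paper's contrapositive is shorter and avoids both of these.

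One remark on the ``main obstacle'' you flag: the regularity situation is better than you suggest. In the proof of Theorem~\ref{thm:Caf} the paper appeals to \cite{CEF} to obtain that $\varphi$ is $C^2$, so $D^2\varphi$ is continuous, the chain rule is classical, and the a.e.\ identity $D^2\varphi(x)a=a$ upgrades to all $x$ by continuity. This removes most of the approximation issues you anticipate.
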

Let us make a few remarks about the existing literature.  
Our proof of Theorem \ref{thm:Caf} will follow Caffarelli's proof, exploiting the fact that $\varphi$ satisfies a Monge-Amp\`ere equation and that the second difference of $\varphi(x)$ is well-behaved as $|x|$ tends to infinity. Other proofs, variants, and extensions of this theorem have also been given in \cite{FGP}, \cite{KM1}, \cite{Ko1}, and \cite{Va1}.  The theorem of De Philippis and Figalli, Theorem 1.2 \cite{DPF}
 identifying the case of equality in Caffarelli's original theorem, has an alternative proof
due to Cheng and Zhou.  That proof involves 
the first non-zero eigenvalue of a Laplacian with drift on a complete smooth metric space with a lower bound on the Bakry-\'Emery Ricci curvature (see Theorem 2 in \cite{CZ}).

The rest of the paper is structured as follows. In Section \ref{sec:contraction}, we prove the version of Caffarelli's contraction theorem and the equality case, Theorems \ref{thm:Caf} and \ref{thm:DPF}.   We then show how our two applications follow from these theorems in Section \ref{sec:consequences}. We first prove the Poincar\'e-Wirtinger type inequality and case of equality, which is a direct consequence of Theorems \ref{thm:Caf} and \ref{thm:DPF}.  Our version of
the Friedland-Hayman inequality requires Proposition \ref{prop:main}.
This converts our problem to one about Gaussian eigenvalues of a convex domain to which Theorems \ref{thm:Caf} and \ref{thm:DPF} apply
 in $\R^{n}$.    We emphasize here that our argument is inspired by
 the argument of Beckner, Kenig and Pipher.   Moreover, because our method
 reduces the case of the convex cone to case of the entire Euclidean space, it
 depends on the original Friedland-Hayman theorem and does not
 replace it.  We end by discussing a possible, natural variant
 of Caffarelli's contraction theorem for geodesically convex subsets
 of spheres.  This variant would provide an alternative
path to the main theorem, Theorem \ref{thm:main}.

\section{Proof of the Caffarelli contraction theorem} \label{sec:contraction}

Recall from Theorem \ref{thm:Brenier}, that $T = \nabla\varphi$ transports the gaussian measure $\mu = e^{-|x|^2/2}\, dx$ onto the measure $c\,\nu$, where $\nu = e^{-F(x)}1_K\, \mu$ for a convex domain $K\subset \R^n$ and a convex function $F$ on $K$. Here $\varphi(x)$ is a convex function on $\R^n$ and the constant $c$ is chosen so that $\mu$ and $c\,\nu$ have the same total measure. To prove Theorem \ref{thm:Caf}, we need to show that the eigenvalues of $D^2\varphi(x)$ are bounded above by $1$.
\begin{proof}{Theorem \ref{thm:Caf}}
We follow the proof Caffarelli used to prove Theorem 11 in \cite{Ca1}. In particular, rather than studying $D^2\varphi(x)$ directly, we work with second differences of $\varphi(x)$. That is, we fix $h>0$, and for each unit direction $e$, $x\in \R^n$, we set
 \begin{align*}
\delta_{e} \varphi(x) = \varphi(x + he) + \varphi(x -he) - 2 \varphi(x).
\end{align*}
To prove the theorem, we need to show that
\begin{align} \label{eqn:Caf-1}
0\leq \delta_{e} \varphi(x) \leq h^2
\end{align}
for all $x$, $e$, and $h>0$, since then letting $h\to0$ gives the desired upper bound. To prove \eqref{eqn:Caf-1} it is sufficient to study $\delta_{e} \varphi(x)$ at a point where it achieves its maximum in both $x$ and $e$, together with its behavior as $|x|$ tends to infinity. However, in the case where the convex set $K$ is not smooth, strictly convex, and bounded, then the behavior of $\varphi(x)$ at infinity can be more complicated. Therefore, we form a sequence of smooth, strictly convex, and bounded sets $K_j\subset K$, which converges to $K$ in Hausdorff distance on compact sets  \cite{Sc1}. (By strictly convex, we mean that each tangent plane to $K_j$ touches $\pa K_j$ at a unique point.) We also obtain corresponding Brenier maps $T_j = \nabla \varphi_j$ transporting $\mu = e^{-|x|^2/2}\, dx$ onto $c_j\nu_j$, with $\nu_j = e^{-F(x)}1_{K_j}\, \mu$. Here the constant $c_j>0$ is chosen to ensure that $c_j\nu_j$ and $\mu$ have the same total measure.

As $F$ is a convex function on $\R^n$, it is in particular in $L^{\infty}(K\cap B_R)$ for all $R>0$. Since the sets $K$ and $K_j$ are convex, this ensures that $\varphi$ and $\varphi_j$ are $C^2$ (see Theorem 1 in \cite{CEF}). Therefore, $\varphi_j$ is a classical solution to the Monge-Amp\'ere equation 
\begin{align} \label{eqn:MA}
\text{det}\left(D^2\varphi_j(x)\right) = \frac{e^{-|x|^2/2}}{\exp\left\{-|\nabla\varphi_j(x)|^2/2-F(\nabla \varphi_j(x))\right\}},
\end{align}
and $\varphi$ satisfies the analogous equation. Moreover, since $K_j$ converges to $K$ on compact sets, the measures $e^{-F(x)}1_{K_j}\, \mu$ converge strongly to $\nu$ as $j$ tends to infinity. Therefore, $\varphi_j(x)$ converges to $\varphi(x)$ uniformly on compact sets (see Theorem 3 in \cite{Ca1}, also \cite{Vi1}, 5.23).  To show \eqref{eqn:Caf-1} and complete the proof of the theorem, it is thus sufficient to establish
\begin{align} \label{eqn:Caf-2}
0\leq \delta_{e} \varphi_j(x) \leq h^2
\end{align}
for all $x$, $e$, and $h>0$, and $j$ fixed.

Note that the lower bound is guaranteed since $\varphi_j$ is convex. To prove the upper bound, suppose first that $\delta_e\varphi_j(x)$ attains its maximum in $x$ and $e$ at $x=x_0$ and $e=e_0$. Then, as shown in the proof of Theorem 11 in \cite{Ca1} (see also \cite{Ca2}), using the fact that $\varphi_j(x)$ satisfies the Monge-Amp\`ere equation in \eqref{eqn:MA}, it satisfies a maximum principle ensuring that $\delta_{e_0}\varphi_j(x_0) \leq h^2$.

To complete the proof we therefore need to study the behavior of $\delta_e\varphi_j(x)$ as $|x|$ tends to infinity. This part of the proof is the reason for using the smooth, strictly convex approximating sets $K_j$, and also why we work with the second difference rather than the second derivative directly. Since the sets $K_j$ will not be balls centered at the origin when $K$ is a proper subset of $\R^n$, this part of the proof requires a small modification of Caffarelli's proof, and so we write it out in detail. 
\begin{lem} \label{lem:infinity}
Suppose that $|x|$ tends to infinity, with $\tfrac{x}{|x|}$ converging to a direction ${\tn}$. Then, 
\begin{align*}
\nabla\varphi_j(x) \to  {y}_{{\tn}},
\end{align*}
with uniform convergence in the direction $\tn$. Here ${y}_{{\tn}}$ is the unique point on $\pa K_j$ with outward unit normal pointing in the direction ${\tn}$.
\end{lem}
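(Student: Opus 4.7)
The plan is to exploit two structural features of the Brenier map $T_j = \nabla\varphi_j$: the transport property forces its image to sit inside the bounded set $\overline{K_j}$, and the monotonicity of gradients of convex functions selects the correct boundary point in each direction of infinity. Strict convexity and smoothness of $\pa K_j$ then pin down the limit uniquely.

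First I would establish two auxiliary facts: $T_j(\R^n)\subset\overline{K_j}$ and $T_j(\R^n)$ is dense in $\overline{K_j}$. Since $T_j$ pushes the strictly positive Gaussian $\mu$ forward to a multiple of $\nu_j$, which is supported in $\overline{K_j}$, every open set $U\subset \R^n\setminus\overline{K_j}$ has empty preimage; continuity of $T_j$ (guaranteed by $\varphi_j\in C^2$) then forces $T_j(x)\in \overline{K_j}$ for every $x$. Conversely, any nonempty relatively open $V\subset K_j$ carries positive $\nu_j$-mass and therefore has nonempty preimage under $T_j$, so the image is dense in $\overline{K_j}$. In particular, for every unit vector $\tn$,
\[
\sup_{x\in\R^n}\langle T_j(x),\tn\rangle=\sup_{z\in\overline{K_j}}\langle z,\tn\rangle.
\]

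Next I would identify the limit along a ray to infinity. Fix $x_k$ with $|x_k|\to\infty$ and $x_k/|x_k|\to\tn$. Boundedness of $T_j$ in $\overline{K_j}$ allows us to pass to a subsequence along which $T_j(x_k)\to y^*\in\overline{K_j}$. The monotonicity of $\nabla\varphi_j$ gives, for any fixed $y_0\in\R^n$,
\[
\langle T_j(x_k)-T_j(y_0),\,x_k - y_0\rangle\ge 0.
\]
Dividing by $|x_k|$ and sending $k\to\infty$ yields $\langle y^*-T_j(y_0),\tn\rangle\ge 0$. Taking the supremum over $y_0$ and using the density established above gives $\langle y^*,\tn\rangle\ge\sup_{z\in\overline{K_j}}\langle z,\tn\rangle$, while $y^*\in\overline{K_j}$ provides the reverse inequality. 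Hence $y^*$ is a maximizer of the linear functional $z\mapsto\langle z,\tn\rangle$ on $\overline{K_j}$, which by strict convexity of $K_j$ is attained at a unique point, and by smoothness of $\pa K_j$ that point is precisely $y_\tn$. Since every subsequential limit must equal $y_\tn$, the full sequence converges.

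Finally, to upgrade to uniform convergence in $\tn$, I would argue by contradiction. The map $\tn\mapsto y_\tn$ is continuous on $\mS^{n-1}$ by strict convexity plus smoothness of $\pa K_j$. If uniform convergence failed, there would be $\epsilon>0$ and $x_k$ with $|x_k|\to\infty$ and $|T_j(x_k)-y_{x_k/|x_k|}|\ge\epsilon$; extracting convergent subsequences $x_k/|x_k|\to\tn$ and $T_j(x_k)\to y^*$ and invoking the preceding step forces $y^*=y_\tn$, contradicting $|y^*-y_\tn|\ge\epsilon$. The main obstacle is not in any single step but in verifying the density of $T_j(\R^n)$ in $\overline{K_j}$; once that measure-theoretic fact is in hand, the identification of the limit is forced by gradient monotonicity and the selection of $y_\tn$ is forced by the two regularity hypotheses on $K_j$.
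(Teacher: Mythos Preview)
Your proof is correct and takes a genuinely different route from the paper's. The paper argues geometrically: for $y=\nabla\varphi_j(x)$ it introduces a cone $\Gamma_y(\theta)$ with vertex $y$ and axis $x$, observes via cyclical monotonicity that its preimage lies in a wider cone about $x$, and uses the measure-preserving property quantitatively (the Gaussian mass of the wider cone vanishes as $|x|\to\infty$, while $\nu_j$ is bounded below on $K_j$) to force $\mathrm{vol}(\Gamma_y(\theta)\cap K_j)\to 0$; letting $\theta\uparrow\pi/2$ then pins $y$ to the boundary point with normal $\tn$. Your argument is instead functional-analytic: you use the transport property only qualitatively to get $T_j(\R^n)\subset\overline{K_j}$ and density of the image, extract a subsequential limit $y^*$ by compactness, and then the elementary monotonicity inequality $\langle T_j(x_k)-T_j(y_0),x_k-y_0\rangle\ge0$ divided by $|x_k|$ identifies $y^*$ as the unique maximizer of $\langle\,\cdot\,,\tn\rangle$ on $\overline{K_j}$. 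Your approach is cleaner and avoids the explicit cone constructions and volume estimates; the paper's approach, on the other hand, gives a more quantitative picture of how $y$ is pushed to the boundary and could in principle yield a rate, though that is not needed here.
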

\begin{proof}{Lemma \ref{lem:infinity}}
The uniqueness of the point $y_{\tn}$ follows immediately from the strict convexity and smoothness of $K_j$. Given ${x}\in\mathbb{R}^{n}$, let ${y} = \nabla \varphi_j ({x}) \in K_j$. Define $\Gamma_{y}(\theta)$ to be the cone of vertex ${y}$ pointing in the direction of ${x}$, with angle $\theta$ (here $\theta$ is fixed, with $0<\theta<\tfrac{\pi}{2}$), so that
\begin{align*}
\Gamma_{{y}}(\theta) = \{{y}'\in\mathbb{R}^{n}: \text{angle}(x,{y}'-{y}) \leq \theta\}.
\end{align*}
By the cyclical monotonicity of the optimal transport mapping, if ${y}' = \nabla \varphi_j({x}')$, then the inner product of ${x}'-{x}$ and ${y}' -{y}$ is nonnegative. Therefore, the pre-image of $\Gamma_{y}(\theta)$ under $\nabla \varphi_j$ is contained in the cone
\begin{align*}
\Gamma_{x}(\theta) = \{{x}'\in\mathbb{R}^{n}: \text{angle}(x,{x}'-{x}) \leq \theta + \tfrac{\pi}{2}\}.
\end{align*}
There exists a constant $a_{\theta}>0$ such that the complement of $\Gamma_{x}(\theta)$ contains the ball of radius $a_{\theta}|x|$ centred at the origin. In particular, as $|x|$ tends to infinity, for $\theta$ fixed, the Gaussian measure of $\Gamma_{x}(\theta)$ tends to zero. Since the map $\nabla \varphi_j$ is measure preserving, and the measure $\nu_j$ is bounded from below on $K_j$, this means that the Lebesgue measure of $\Gamma_{{y}}(\theta)\cap K_j$ tends to $0$. Therefore, given $\eta>0$ and $0 < \theta < \tfrac{\pi}{2}$, there exists $M_{\theta}>0$ such that if $|x|>M_{\theta}$, then the Lebesgue measure of $\Gamma_{{y}}(\theta)\cap K_j$ is less than $\eta$. This in particular ensures dist$(y,\pa K_j)<C_1\eta$, where $C_1$ is a constant depending only on the Lipschitz bound for the convex set $K_j$. As $\theta$ tends to $\tfrac{\pi}{2}$, the cone $\Gamma_y(\theta)$ approaches the half-plane passing through $y$ in the direction $\tfrac{x}{|x|}$, and this forces $y$ to approach $y_n$, the unique point on $\pa K_j$ with outward unit normal $\textbf{n}$. More precisely, for any $\eta>0$, if $|\textbf{n}-\textbf{m}| <\eta$, then $|y_{\textbf{n}}-y_{\textbf{m}}| < C_2\eta$, for a constant $C_2$ depending only on the strict convexity of $K_j$ (but not $\textbf{n}$). Therefore, given $\eta>0$, we can choose $\delta>0$ and $\theta^* < \tfrac{\pi}{2}$ such that if $\left|\tfrac{x}{|x|} - \textbf{n}\right| < \delta$, with $|x|>M_{\theta^*}$, then $|y-{y}_\textbf{n}| < \eps$. This proves that $y$ converges to $y_{\textbf{n}}$, uniformly in the direction $\textbf{n}$. 
\end{proof}
To conclude the proof of the theorem, we note that
\begin{align*}
0\leq h^{-2}\delta_e\varphi_j(x) \leq \frac{\nabla \varphi_j({x} + h{e})\cdot {e} - \nabla \varphi_j({x}-h{e})\cdot {e}}{2h},
\end{align*}
and for $h>0$, ${e}$ fixed, $\left|\tfrac{{x} + h{e}}{|{x} + h{e}|} - \tfrac{{x} - h{e}}{|{x} - h{e}|}\right|$ tends to $0$ as $|{x}|$ tends to infinity. Therefore, Lemma \ref{lem:infinity} implies that $\delta_e\varphi_j(x)$ tends to $0$ as $|x|$ tends to infinity, and  hence $\delta_e\varphi_j(x)\leq h^2$ for all ${x}$, ${e}$ and $h>0$ as required.
\end{proof}
\begin{proof}{Theorem \ref{thm:DPF}}
Now that we have proved Theorem \ref{thm:Caf}, after ordering the eigenvalues of $D^2\varphi(x)$, we can ensure that
\begin{align*}
0\leq \la_1(D^2\varphi(x)) \leq \cdots \leq \la_{n}(D^2\varphi(x)) \leq 1.
\end{align*}
To deal with the case of equality, where for some $k\geq1$, $\la_{n-k+1}(D^2\varphi(x)) = 1$ for all $x\in\R^n$, the proof of De Philippis and Figalli in Theorem 1.2 in \cite{DPF} (used to deal with the case of stability in Caffarelli's original contraction theorem) still applies and so we just briefly summarize their proof:  Defining the convex function $\Psi(x) = \tfrac{1}{2}|x|^2 - \varphi(x)$, the assumption of the theorem implies that det$(D^2\Psi)(x) = 0$. Subtracting a linear function from $\Psi(x)$ (which only translates $\nu$), we can assume that $\Psi(x) \geq\Psi(0) = 0$. Combining det$(D^2\Psi)(x) = 0$ with an Alexandrov estimate, de Philippis and Figalli show that the set $\Sigma = \{\Psi = 0\}$ does not have an exposed point. Therefore, the set $\Sigma$ must contain a line (see \cite{Bar}, Lemma 3.5 in Chapter 2). Rotating so that this line is $\mathbb{R}{e}_1$, and combining this with the convexity of $\Psi$ implies that $\pa_{{e}_1}\Psi({x}) \equiv 0$. Therefore, we can view $\Psi$ as a function of the variables $x' = (x_2,\ldots,x_n)$, and write the mapping $T = \nabla \varphi$ as $T(x) = \left(x_1,x' - \nabla \Psi(x')\right)$. This means that the measure $\nu$ can be written as $\nu = e^{-x_1^2/2}\, dx_1\otimes \nu_1$, where $T_1(x') = x' - \nabla \Psi(x')$ transports the $(n-1)$-dimensional gaussian measure $e^{-|x'|^2/2}$ onto $c\nu_1$.  
Moreover, we can write $\nu_1$ as $e^{-|x'|^2/2-G_1(x')}1_{K_1} \, dx'$, for a convex set $K_1$ and convex function $G_1$, since these properties are preserved under taking marginals (see Theorem 4.3 in \cite{BL}). This proves the theorem for $k=1$, and by recursively applying this argument the theorem holds.
\end{proof}

\section{Consequences of the contraction theorem} \label{sec:consequences}

In this section we use Theorems \ref{thm:Caf} and \ref{thm:DPF} to study the Friedland-Hayman and Poincar\'e-Wirtinger type inequalities discussed in the introduction. The Poincar\'e-Wirtinger result follows as a direct application, 
so we will prove it first.
\subsection{A Poincar\'e-Wirtinger inequality} 
\begin{proof}{Theorem \ref{thm:PW}}
Let $u(x)$ be the eigenfunction corresponding to minimizing the quantity $\mu_1(K)$ given in \eqref{eqn:PW}.  Applying Theorem \ref{thm:Caf} with $F(x) \equiv 0$ (so that $\nu = e^{-|x|^2/2}1_K\, dx$), we obtain a transport map $T = \nabla \varphi$ from $\mu = e^{-|x|^2/2} \,dx$ to $c\,\nu$, such that $T$ is Lipschitz, with Lipschitz constant bounded by $1$. We define $v(x)$ by $v(x) = u(T(x))$, and since $T$ is a transport map, we have
\begin{align*}
\int_{\R^n} v(x) e^{-|x|^2/2} \, dx = \int_{\R^n}u(T(x)) e^{-|x|^2/2} \, dx = c\int_{K}u(x) \, d\nu = 0. 
\end{align*}
Therefore, $v(x)$ is an admissible test function for $\mu_1(\R^n)$, and so
\begin{align} \label{eqn:PWproof1}
\mu_1(\R) = \mu_1(\R^n) \leq \displaystyle{\frac{\int_{\R^n}\left|\nabla v(x)\right|^2e^{-|x|^2/2} \, dx }{\int_{\R^n}|v(x)|^2e^{-|x|^2/2}\, dx}}.
\end{align} 
Moreover, since the Lipschitz constant of $T$ is bounded by $1$, we have
\begin{align} \label{eqn:PWproof2}
\left|\nabla v(x)\right| \leq \left|(\nabla u)(T(x))\right|
\end{align}
for all $x\in \mathbb{R}^n$. Combining this with the fact that $T$ is a transport map, we can use \eqref{eqn:PWproof1} to obtain
\begin{align*}
\mu_1(\R) \leq \displaystyle{\frac{\int_{K}\left|\nabla u(x)\right|^2e^{-|x|^2/2} \, dx }{\int_{K}|u(x)|^2e^{-|x|^2/2}\, dx}} = \mu_1(K).
\end{align*} 
To deal with the equality case $\mu_1(K) = \mu_1(\R) = 1$ we use Theorem \ref{thm:DPF}. If $K$ contains a line (say $\R_{e_1}$), then we can set $u(x) = x_1$ to obtain $\mu_1(K) = 1$. Now suppose that  $K$ does not contain a line and again let $u(x)$ be the eigenfunction corresponding to $\mu_1(K)$. Then by Theorem \ref{thm:DPF}, there exist $\eps>0$ and a set $U\subset \R^n$ of positive measure for which $\la_j(D^2\varphi(x))\leq1-\eps$ for all $x\in U$, $1\leq j \leq n$. The image of $U$ under $T$ also has positive measure. Setting $v(x) = u(T(x))$, we thus have
\begin{align*}
\int_{U}\left|\nabla v(x)\right|^2e^{-|x|^2/2} \, dx  \leq (1-\eps) \int_{U}\left|\nabla u(T(x))\right|^2e^{-|x|^2/2} \, dx.
\end{align*}
Using this inequality in the above argument in place of \eqref{eqn:PWproof2}, we obtain $\mu_1(K)>1$ and so we cannot have equality unless $K$ contains a line. 
\end{proof}

\subsection{The Friedland-Hayman inequality}

In this section we prove Theorem \ref{thm:main}.  Recall that $u_+$ is
a harmonic function on the cone $\Gamma_+$ of homogeneous 
degree $\alpha_+$ satisfying
boundary conditions $u_+ = 0$ on $\gamma_D^+$ and 
$(\partial/\partial\nu) u_+ = 0$ (in the weak sense)
on $\gamma^+_N$. The first step is to prove Proposition \ref{prop:main}, a lower bound 
on the characteristic exponent $\alpha_+$ in 
 terms of the lowest Gaussian eigenvalue on the cone with the same boundary
conditions.  The convexity of the cone $\Gamma$ is not used in this proof.

Fix an integer $N\ge 0$, and extend the function $u_+$ to
the product cone $\Gamma_+ \times \R^N$ to be constant in the extra variable:
\[
u_N(x,y) = u_+(x), \quad (x,y) \in \Gamma_+\times \R^N.
\]
Denote the spherical cross-section of the cone $\Gamma_+ \times \R^N\subset \R^{m}$ by 
\[
\Sigma_m = \{(x,y) \in \Gamma_+\times \R^N: |x|^2 + |y|^2 = 1\},
\]
where $m = n+N$. Let $d\sigma_m$, $\nabla_m$, $\Delta_m$ denote the spherical
measure, gradient and Laplace-Beltrami operator on the unit $(m-1)$ sphere in
$\R^m$.
Then, setting $\alpha = \alpha_+$, $u_N$ is homogeneous of degree
$\alpha$ in the product cone in $\R^m$, and so by separation of variables,
\[
\Delta_m u_N = -\alpha(\alpha+ m-2) u_N \quad \mbox{on}
\quad  \Sigma_m. 
\]
Integrating by parts, using the boundary conditions, we get
\begin{equation}\label{eq:sphere-eigenvalue}
\int_{\Sigma_m} |\nabla_m u_N|^2 \, d\sigma_m = \alpha(\alpha+m-2)\int_{\Sigma_m} u_N^2 \, d\sigma_m \ .
\end{equation}
We rewrite this as 
\[
\alpha(\alpha+ N+ n-2) = 
\frac{\int_{\Sigma_m} |\nabla_m u_N|^2 \, d\sigma_m}
{\int_{\Sigma_m} u_N^2 \, d\sigma_m} 
\]
The integrand in the numerator, on $|x|^2 + |y|^2=1$, is given by
\[
|\nabla_m u_N|^2 
= |\nabla_{x,y} u_N (x,y)|^2 - |(x,y)\cdot \nabla_{x,y}u_N(x,y)|^2 
= |\nabla  u_+(x))|^2 - |x\cdot \nabla u_+(x)|^2 =
|\nabla  u_+(x)|^2 - (\alpha \, u_+(x))^2,
\]
and the formula for the integral of a function $f$ on $\Sigma_m$ that depends 
only on $x$ is  
\[
\int_{\Sigma_m} f(x) \, d\sigma_m(x,y) = c_N \int_{\Gamma_+ \cap \{|x|<1\}} f(x) (1-|x|^2)^{N/2 - 1} dx.
\]
Therefore, 
\[
\frac{\alpha(\alpha+ N+n-2)}{N} = 
\frac{\frac1N  \int_{\Gamma_+ \cap \{|x|<1\}} 
[|\nabla  u_+(x))|^2 - (\alpha u_+(x))^2] (1-|x|^2)^{N/2 - 1} dx}
 {\int_{\Gamma_+ \cap \{|x|<1\}} u_+(x)^2 (1-|x|^2)^{N/2 - 1} dx} \ .
 \]
Notice that the left hand side is $\alpha + O(1/N)$ as $N\to \infty$.

Next, change variables by setting $f_N(z) = u_+(z/\sqrt{N})$, $|z| < \sqrt{N}$,
to obtain
\begin{align*}
\alpha \
& = \
\frac{\int_{\Gamma_+ \cap \{|z|<\sqrt{N}\}} 
[|\nabla  f_N(z)|^2 - \frac1N |\alpha \, f_N(z)|^2] \left(1-\frac{|z|^2}N\right)^{N/2 - 1} dz}
{\int_{\Gamma_+ \cap \{|z|<\sqrt{N}\}} 
f_N(z)^2 \left(1-\frac{|z|^2}N\right)^{N/2 - 1} 
dz} \ + \ O(1/N) \\
& = \
\frac{\int_{\Gamma_+ \cap \{|z|<\sqrt{N}\}} 
|\nabla  f_N(z)|^2 \left(1-\frac{|z|^2}N\right)^{N/2 - 1} dz}
{\int_{\Gamma_+ \cap \{|z|<\sqrt{N}\}} 
f_N(z)^2 \left(1-\frac{|z|^2}N\right)^{N/2 - 1} 
dz} \ + \ O(1/N)
\end{align*}
Because
\[
\left(1-\frac{|z|^2}N\right)^{N/2 - 1}  \to e^{-|z|^2/2} \ \mbox{as} \ N\to \infty,
\]
this nearly completes the proof.  The additional property we need
to check is that as $N \to \infty$, 
\begin{equation}\label{eq:cut-off}
\frac{\int_{\Gamma_+ \cap \{\sqrt{N} - 1 <|z|<\sqrt{N}\}} 
[f_N(z)^2 + |\nabla f_N(z)|^2 ] \left(1-\frac{|z|^2}N\right)^{N/2 - 1} dz}
{\int_{\Gamma_+ \cap \{|z|<\sqrt{N}\}}f_N(z)^2 \left(1-\frac{|z|^2}N\right)^{N/2 - 1} dz} 
\to 0. 
\end{equation}
An estimate like \eqref{eq:cut-off} is required because $f_N$ is not a suitable
test function.  Although $f_N$ does vanish on $\gamma^+_D$ as required, it
does not vanish on the outer boundary $|z| = \sqrt{N}$. 
The simplest truncation is by a radial cut-off function of slope $1$ 
on a band of unit width 
$\sqrt{N} - 1 < |z| < \sqrt{N}$, which gives a legitimate test function and proves our proposition, assuming \eqref{eq:cut-off} holds. 

Finally, \eqref{eq:cut-off}  follows from the fact that $f_N$ and $\nabla f_N$ 
grow like powers of $|z|$, whereas the weight resembles $e^{-|z|^2/2}$.   In detail, set 
\[
A = \int_{\Sigma_n} u_+^2 \, d\sigma_n .
\]
Then, since $u_+$ is homogeneous of degree $\alpha$, 
\[
\int_{\Gamma_+ \cap \{|z|<\sqrt{N}\}}f_N(z)^2 w(|z|)\, dz
=  A \int_0^{\sqrt{N}} \frac{r^{2\alpha}}{N^\alpha} w(r) r^{n-1} \, dr
\]
By equation \eqref{eq:sphere-eigenvalue} for $m=n$, $N=0$, we have
\[
\int_{\Sigma_n} |\nabla u_+|^2 \, d\sigma_n = \alpha(\alpha  + n-2) A.
\]
Thus,  since $\nabla u_+$ is homogeneous of degree $\alpha-1$, 
\[
\int_{\Gamma_+ \cap \{|z|<\sqrt{N}\}}  |\nabla f_N(z)|^2   w(|z|) dz 
= 
\alpha(\alpha  + n-2) A \int_0^{\sqrt{N}} \frac{r^{2\alpha-2}}{N^\alpha} w(r) r^{n-1} \, dr.
\]
Note that the denominator $N^\alpha$ is the same in both formulas because
of the choice of change of variable $z/\sqrt{N}$.  

With these formulas for the numerator and denominator, 
one sees that \eqref{eq:cut-off} is valid.  Indeed, setting $w(r) = \left(1-r^2/N\right)^{N/2-1}$,
\[
\int_{\sqrt{N}-1}^{\sqrt{N}} ( r^{2\alpha} + r^{2\alpha-2}) (1-r^2/N)^{N/2-1} r^{n-1}\, dr
\]
tends to zero very fast since $(1-r^2/N)^{N/2-1} \approx e^{-r^2/2} \approx e^{-N/2}$
in the range $\sqrt{N} - 1 < r < \sqrt{N}$, whereas
\[
\int_{0}^{\sqrt{N}} r^{2\alpha}  (1-r^2/N)^{N/2-1} r^{n-1}\, dr
\]
tends to a positive limit.  This concludes the proof of Proposition \ref{prop:main}.

We are now ready to prove the Friedland-Hayman inequality in the
original case with $\Gamma = \mathbb{R}^{n}$.
\begin{defn} \label{def:full}
For $\Omega\subset\R^{n}$ open, with Gaussian measure satisfying $\int_{\Omega}e^{-|x|^2/2}\ud x <1$, define $\lambda_g(\Omega)$ by
\begin{align*}
\lambda_g(\Omega) =  \inf_{w\in H_0^1(\Omega)} \frac{\int_{\Omega}\left| \nabla w(x) \right|^2 e^{-|x|^2/2} \, dx}{\int_{\Omega} w(x)^2 e^{-|x|^2/2} \, dx}.
\end{align*}
\end{defn}
Alternatively, $\la_g(\Omega)$ is the first eigenvalue of
$$
\begin{array}{rll}
-(\partial/\partial x_i) \left(e^{-|x|^2/2} \partial u/\partial x_i\right)  = & 
\lambda \, e^{-|x|^2/2}u(x)  & \emph{ in } \Omega \\
u(x)  = & 0 &  \emph{ on } \pa\Omega.
\end{array}
$$
Via Gaussian symmetrization, the following theorem holds:
\begin{thm}[Erhard, Proposition 2.3 in \cite{Er} and Carlen, Kerce, Theorem 3 in \cite{CK}] \label{thm:BCF}
Let $\Omega$ and $\lambda_g$ be as in \emph{Definition \ref{def:full}}. Let $H$ be a half-space with the same Gaussian measure as $\Omega$, then
\begin{align*}
\la_g(\Omega) \geq \la_g(H),
\end{align*}
with equality if and only if $\Omega$ is equal to $H$, up to a rotation.
\end{thm}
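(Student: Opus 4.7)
The plan is to use Gaussian (Ehrhard) symmetrization, which is the analogue of Schwarz symmetrization adapted to the Gaussian measure $d\gamma = (2\pi)^{-n/2} e^{-|x|^2/2}\, dx$. Given a non-negative test function $w\in H_0^1(\Omega)$, I would define its Ehrhard rearrangement $w^* \colon H \to [0,\infty)$ on the half-space $H$ of the same Gaussian measure as $\Omega$, by requiring that for each $t>0$ the superlevel set $\{w^* > t\}$ is a half-space parallel to $\partial H$ with Gaussian measure equal to that of $\{w > t\}$. By construction, $w^*$ has the same distribution function as $w$ with respect to $d\gamma$, so in particular
\[
\int_{H} (w^*)^2\, e^{-|x|^2/2}\, dx = \int_{\Omega} w^2\, e^{-|x|^2/2}\, dx.
\]

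Next I would establish the Gaussian Pólya--Szeg\H{o} inequality
\[
\int_{H} |\nabla w^*|^2\, e^{-|x|^2/2}\, dx \le \int_{\Omega} |\nabla w|^2\, e^{-|x|^2/2}\, dx.
\]
This is the heart of the matter. The standard route is the Gaussian coarea formula together with the Gaussian isoperimetric inequality of Borell and Sudakov--Tsirelson: among all Borel sets of a fixed Gaussian measure, half-spaces uniquely minimize the Gaussian perimeter $P_\gamma$. Writing $\mu(t) = \gamma(\{w>t\})$ and applying coarea, Cauchy--Schwarz, and the isoperimetric inequality level by level produces the one-dimensional rearrangement inequality on each slice; integrating in $t$ yields the Pólya--Szeg\H{o} bound. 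Combining the two displays with the Rayleigh quotient characterization of $\lambda_g$ gives $\lambda_g(\Omega) \ge \lambda_g(H)$.

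For the rigidity statement, suppose $\lambda_g(\Omega) = \lambda_g(H)$. Let $w$ be a first eigenfunction for $\Omega$, which may be taken non-negative. Equality in the Pólya--Szeg\H{o} inequality forces equality, for a.e.\ $t$, in the Gaussian isoperimetric inequality applied to $\{w > t\}$. Here I would invoke the rigidity in the Gaussian isoperimetric inequality (due to Carlen--Kerce, later refined quantitatively by Cianchi--Fusco--Maggi--Pratelli and by Mossel--Neeman): equality holds only for half-spaces up to a $\gamma$-null set. Hence a.e.\ superlevel set of $w$ is, up to null sets, a half-space; since these sets are nested and share a common direction determined by the gradient of the eigenfunction on $H$, their union (which equals $\Omega$ up to measure zero) is itself a half-space. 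Therefore $\Omega$ agrees with a rotate of $H$ up to a set of measure zero, which yields the claimed characterization of equality.

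The main obstacle is the rigidity in the Gaussian isoperimetric inequality, because unlike the Euclidean setting, there is no compactness-via-translation argument, and one must rule out pathological extremizers (strips, finite-measure approximations, etc.). For the upper inequality itself, the subtle point is the validity of the Gaussian coarea identity when $\partial \Omega$ is merely measurable; this is handled by first approximating $w$ by smooth compactly supported functions in $\Omega$ and passing to the limit in both the numerator and denominator of the Rayleigh quotient.
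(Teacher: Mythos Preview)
Your proposal is correct and matches exactly the route the paper indicates: the paper does not give its own proof of this theorem but simply cites Erhard (for the Gaussian P\'olya--Szeg\H{o} inequality via Ehrhard symmetrization) and Carlen--Kerce (for the rigidity in the Gaussian isoperimetric inequality), prefacing the statement with ``Via Gaussian symmetrization, the following theorem holds.'' One small point worth tightening in the equality step: going from equality in P\'olya--Szeg\H{o} to equality in isoperimetry on almost every level requires ruling out plateaus of the eigenfunction (the Brothers--Ziemer obstruction), which here is harmless because the first Dirichlet eigenfunction is real-analytic in $\Omega$ and hence has $|\nabla w| \neq 0$ almost everywhere.
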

For half spaces, the corresponding eigenfunction is a function of a single variable. Therefore, we can apply the results of Beckner, Kenig, and Pipher.
\begin{thm}[Beckner, Kenig, Pipher \cite{BKP}; in Section 12.2 of \cite{CaS}]
\label{thm:BKP}
Let $H_\pm$ be complementary half-spaces. Then,
\begin{align*}
\la_g(H_+) + \la_g(H_-) \geq 2,
\end{align*}
with equality if and only if $\int_{H_+}e^{-|x|^2/2} \, dx = \int_{H_-}e^{-|x|^2/2} \, dx$.
\end{thm}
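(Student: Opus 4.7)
The plan is to reduce Theorem \ref{thm:BKP} to a one-dimensional assertion and then establish convexity of a certain eigenvalue as a function of the boundary position. Writing $H_+ = \{x \in \R^n : x_1 > a\}$ and $H_- = \{x_1 < a\}$ for some $a \in \R$, separation of variables (the transverse Gaussian has lowest eigenvalue $0$ with constant eigenfunction) gives
\[
\la_g(H_+) = \la(a), \qquad \la_g(H_-) = \la(-a),
\]
where $\la(c)$ denotes the first Dirichlet eigenvalue of the one-dimensional Ornstein--Uhlenbeck operator $L f = -f'' + x f'$ on $(c, \infty)$.  At $c = 0$, the linear function $\phi_0(x) = x$ satisfies $L \phi_0 = \phi_0$, $\phi_0(0) = 0$, and $\phi_0 > 0$ on $(0, \infty)$, so $\la(0) = 1$.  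Since the condition $\int_{H_+} e^{-|x|^2/2}\, dx = \int_{H_-} e^{-|x|^2/2}\, dx$ is equivalent to $a = 0$, this already gives $\la_g(H_+) + \la_g(H_-) = 2$ in the equality case.

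For the inequality when $a \ne 0$, the plan is to reduce Theorem \ref{thm:BKP} to the single statement that $c \mapsto \la(c)$ is strictly convex on $\R$.  Given convexity, a two-point Jensen estimate gives
\[
\la_g(H_+) + \la_g(H_-) = \la(a) + \la(-a) \;\ge\; 2\,\la\!\left(\tfrac{a + (-a)}{2}\right) = 2\,\la(0) = 2,
\]
with strict inequality whenever $a \ne 0$.  This yields both the inequality and the characterization of the equality case simultaneously.

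The main obstacle is the strict convexity of $c \mapsto \la(c)$.  One natural attack proceeds via Hadamard's first variation formula: normalizing the first eigenfunction so that $\int_c^\infty \phi_c^2 e^{-x^2/2}\, dx = 1$,
\[
\la'(c) = \phi_c'(c)^2 \, e^{-c^2/2},
\]
and differentiating again, using the ODE $\phi_c'' = x\phi_c' - \la(c)\phi_c$ (which at $x = c$ forces $\phi_c''(c) = c\,\phi_c'(c)$ because $\phi_c(c) = 0$), yields an expression for $\la''(c)$ in terms of the boundary data of $\phi_c$ and of its parameter derivative $\partial_c \phi_c$.  Equivalently and more concretely, one can write the eigenfunctions as parabolic cylinder (Hermite) functions $D_\la$ of non-integer order $\la$, so that the Dirichlet condition $\phi_c(c) = 0$ becomes the implicit equation $D_\la(c) = 0$; strict convexity of $\la(c)$ then follows from classical monotonicity and convexity properties of the first positive zero of $D_\la$ in $\la$.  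This is the analysis carried out in Beckner--Kenig--Pipher \cite{BKP} and summarized in Section 12.2 of \cite{CaS}.
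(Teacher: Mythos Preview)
The paper does not supply its own proof of Theorem \ref{thm:BKP}; the result is quoted from Beckner, Kenig, and Pipher with a pointer to Section~12.2 of \cite{CaS} for the details. There is therefore no in-paper argument to compare against, only the cited sources.

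That said, your outline is correct and matches the shape of the argument in \cite{CaS}. The reduction to one dimension by separation of variables is standard and valid; the identification $\la(0)=1$ via the eigenfunction $\phi_0(x)=x$ is right; and your Hadamard first-variation formula $\la'(c)=\phi_c'(c)^{2}\,e^{-c^{2}/2}$ (with $L^2$ normalization against the Gaussian weight) is correct. The two-point Jensen step, reducing the whole theorem to strict convexity of $c\mapsto\la(c)$, is also a legitimate reduction. Where your proposal stops short is exactly where the substantive work lies: you do not actually prove the strict convexity, but instead sketch two possible lines (second variation, or zeros of parabolic cylinder functions) and then cite \cite{BKP} and \cite{CaS}. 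Since the paper itself defers to precisely those references at precisely this point, your proposal is consistent with the paper's treatment---indeed it supplies more context than the paper does---but it is not an independent proof; it is a more detailed restatement of what is being cited.
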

In particular, combining these two theorems with Proposition \ref{prop:main} establishes the original Friedland-Hayman inequality and classifies the case of equality. For the general case where $\Gamma$ is a proper, convex subset of $\R^n$, we will use Theorems \ref{thm:Caf} and \ref{thm:DPF} to compare the Gaussian eigenvalues on $\Gamma_{\pm}$ to those on complementary subsets of $\R^{n}$. In particular, the proof is now similar to that of Theorem \ref{thm:PW} with some small modifications coming from the fact that we have to consider eigenvalues on complementary subsets. 

We first apply Theorem \ref{thm:Caf} with $K = \Gamma$ and $F(x)\equiv0$. This gives a transport map $T = \nabla \varphi$ from $\mu = e^{-|x|^2/2}\, dx$ to $c_{n,K}\nu$, with $\nu = e^{-|x|^2/2}1_\Gamma \, dx$ (and $c_{n,K}$ a constant ensuring $c_{n,K}\nu$ and $\mu$ have the same total measure), such that $T$ is Lipschitz with Lipschitz constant bounded above by $1$. Let $w_{\pm}$, defined on $\Gamma_{\pm}$ respectively, be admissible test functions for the infimum given in Proposition \ref{prop:main}. Extending $w_{\pm}$ by zero so that they are defined on $\Gamma$, we set $v_+(x) = w_+(T(x))$, $v_-(x) = w_-(T(x))$. Since $T$ is a transport map, we have
\begin{align} \label{eqn:transport1}
\int_{\R^n} v_{\pm}(x)^2  e^{-|x|^2/2}\, dx = \int_{\R^n}w_{\pm}(T(x))^2 e^{-|x|^2/2} \, dx = c\int_{\Gamma} w_i(x)^2 \,d \nu.
\end{align}
$T$ is Lipschitz, with Lipschitz constant bounded by $1$, and so
\begin{align} \label{eqn:transport2}
\left|\nabla v_{\pm}(x)\right| \leq \left|\left(\nabla w_{\pm}\right)(T(x))\right| \left|\nabla T(x)\right| \leq  \left|\left(\nabla w_{\pm}\right)(T(x))\right|.
\end{align}
Combining \eqref{eqn:transport1} and \eqref{eqn:transport2} gives
\begin{align} \label{eqn:transport3}
\frac{\int_{\Gamma_{+}}\left| \nabla w_+(x) \right|^2 e^{-|x|^2/2} \, dx}{\int_{\Gamma_{+}} w_+(x)^2  e^{-|x|^2/2} \, dx} \geq \frac{\int_{\R^{n}}\left| \nabla v_+(x) \right|^2  e^{-|x|^2/2} \, dx}{\int_{\R^{n}} v_+(x)^2 e^{-|x|^2/2} \ud x},
\end{align}
and the same for $\Gamma_{-}$, $w_-$ and $v_-$. Since $w_1$ and $w_2$ have disjoint supports in the cone $\Gamma$, so do the functions $v_1$ and $v_2$ in $\R^{n}$. Applying Theorems \ref{thm:BCF} and \ref{thm:BKP} thus implies that 
\begin{align*}
\frac{\int_{\R^{n}}\left| \nabla v_+(x) \right|^2  e^{-|x|^2/2} \, dx}{\int_{\R^{n}} v_+(x)^2 e^{-|x|^2/2} \, dx} + \frac{\int_{\R^{n}}\left| \nabla v_-(x) \right|^2  e^{-|x|^2/2} \ud x}{\int_{\R^{n}} v_-(x)^2  e^{-|x|^2/2} \, dx} \geq 2,
\end{align*}
and by Proposition \ref{prop:main}, this proves the desired inequality in Theorem \ref{thm:main}.

We now turn to the case of equality. Suppose that 
the cone $\Gamma$ does not contain a line.  Then by Theorem \ref{thm:DPF}, there exist $\eps>0$ and a set $U_1\subset\R^{n}$ of positive measure for which $\la_{j}(D^2\varphi(x))\leq1-\eps$ for all $x\in U_1$, $1\leq j \leq n$. The image of $U_1$ under $T$ is also of positive measure, and so without loss of generality, assume that $T(U_1)\cap \Gamma_{+}$ has positive measure. Letting $w_+$ be the minimizer for the infimum in Proposition \ref{prop:main} for $\Gamma_{+}$, we therefore have
\begin{align*}
\left|\nabla v_+(x)\right| \leq (1-\eps) \left|\left(\nabla w_+\right)(T(x))\right| \quad \text{on } T^{-1}\left(T(U_1)\cap \Gamma^{+}\right).
\end{align*}
Inserting this strict inequality into the argument above ensures that we cannot have the equality $\alpha_{+}+\alpha_{-} = 2$.

Now suppose that we have equality. Then we can write the cone $\Gamma$ in the form $\Gamma = \R^{k}\times \Gamma'$, where $\Gamma'$ does not contain a line. In particular, by Theorem \ref{thm:DPF}, we have $T(x) = (x_1-p_1,\ldots,x_k-p_k,T'(x_{k+1},\ldots,x_{n+1}))$ for some fixed $(p_1,\ldots,p_k)\in\R^k$ and with  $|\nabla T'|<1$ on a set of positive measure $U_2$ in $\R^{n-k}$. Assuming that $T'(U_2)\cap \Pi'_{n-k}\Gamma_{+}$ has positive measure in $\R^{n-k}$ (where $\Pi'_{n-k}\Gamma_+ = \{x'\in \R^{n-k}:(x_1,\ldots,x_k,x')\in \Gamma_+$ for some $(x_1,\ldots,x_k)\in\R^k\}$), in order to have equality, the minimizer $w_{+}$ for $\Gamma_{+}$ in Proposition 
\ref{prop:main} must depend only on the variables $x_1,\ldots,x_k$. Therefore, $v_{+}(x) = w_{+}(x-p)$ for some $p\in R^n$, and by Theorem \ref{thm:BCF}, $w_{+}$ is an explicit ODE solution depending only on one variable and $\Gamma_{+}$ equals  the intersection of $\Gamma$ with a half-space. That is we can write $\Gamma_{+} = H\times \Gamma'$, $\Gamma_{-} = \left(\R^k\backslash H\right)\times \Gamma'$ for a half-space $H\subset \R^k$. This in particular ensures that $T'(U_2)\cap\Pi'_{n-k} \Gamma_{-}$ also has positive measure in $\R^{n-k}$, and so for equality $w_{-}$ depends only on the variables $x_1,\ldots,x_k$. We have therefore reduced to the setting of Theorem \ref{thm:BKP} in $\R^k$, and so for equality, after a rotation, we have $\Gamma_{+} = \{x_1>0\}\cap \Gamma$ and $\Gamma_{-} = \{x_1<0\}\cap \Gamma$. 

\section{Remarks on transport maps on the sphere $\mS^{n-1}$}

To prove the Friedland-Hayman type inequality, Theorem \ref{thm:main}, we used Proposition \ref{prop:main} to obtain a lower bound on the characteristic exponents $\alpha_\pm $ in terms of Gaussian eigenvalues. This then allowed us to apply the Caffarelli contraction theorem, Theorem \ref{thm:Caf}. A possible alternative to this is to work directly on the sphere $\mS^{n-1}$ and prove an analogous version of the Caffarelli contraction theorem on the sphere. In \cite{Mc1}, McCann shows the following: Let $(M,g)$ be a smooth, compact manifold without boundary, with distance function $d(x,y)$, and let $\mu_1$, $\mu_2$ be probability measures on $M$. There exists a unique mapping $T$ transporting $\mu_1$ onto $\mu_2$ minimizing the total cost with respect to the cost function $c(x,y) = d(x,y)^2/2$. The mapping $T(x) = \exp_{x}\left[-\nabla \psi(x)\right]$ for a $c$-concave function $\psi$, where one defines a function $\psi$ to be $c$-concave if $\psi = \left(\psi^c\right)^c$, for
\begin{align*}
\psi^c(y) = \inf_{x\in M} c(x,y) - \psi(x).
\end{align*}
Now let $\mu$ be the uniformly distributed probability measure on a hemisphere 
$\mS^{n-1}_{+}$ in $\mS^{n-1}$, the appropriate multiple of the volume form
of the round metric. Define the measure $\nu$ by
\begin{align*}
\nu = e^{-f}\mu\big|_{W}.
\end{align*}
Here $W$ is a (geodesically) convex subset of $\mS^{n-1}_+$ and $f$ is a convex function on $W$, such that $\nu$ is a probability measure. It is then natural to pose the following questions, for which the tools of Fathi et al. \cite{FGP} seem appropriate:
\\
\\
1) Does there exist a transport map $T(x) = \exp_{x}\left[-\nabla \psi(x)\right]$ from $\mu$ to $\nu$ for a $c$-concave function $\psi$ such that $T$ is Lipschitz on $\mS^{n-1}_{+}$ with Lipschitz constant bounded by $1$?
\\

\noindent
2) If, at almost every point of $\mS^{n-1}_+$, there is a direction in which $T$ is not a strict contraction, then is it true that, after a rotation, $W$ contains the antipodal points $(\pm 1,0,\ldots,0)$ in  
$\mS^{n-1}_+$ and that $f$ is independent of the $x_1$-variable? \\

\noindent
3)  If there is a pair of points $x$ and $y$ for which $d(x,y) = d(T(x), T(y))$,
then does $W$ contain the full geodesic through $x$ and $y$,
all the way to the antipodal points? And does $T$ split in that direction as in 
Question (2)?  Is there an infinitesimal version of this phenomenon at
a single point?

\end{document}